\theoremstyle{plain}
\newtheorem{theorem}{Theorem}
\newtheorem*{theorem*}{Theorem}
\newtheorem*{corollary*}{Corollary}
\newtheorem{definition}{Definition}
\newtheorem*{remark*}{Remark}
\def\mathscr{\mathfrak}
\def\R{{\mathbb R}}
\def\C{{\mathbb C}}
\def\CC{{\mathcal C}}
\def\ff{\widetilde{f}}
\def\-{\backslash}
\date{}
\begin{document}

\title{Imaginary triangles, Pythagorean theorems, and algebraic geometry}

\author{Sergiy Koshkin\\
 Department of Mathematics and Statistics\\
 University of Houston-Downtown\\
 One Main Street\\
 Houston, TX 77002\\
 e-mail: koshkins@uhd.edu}
\maketitle
\begin{abstract}\
We extend the notion of triangle to ``imaginary triangles" with complex valued sides and angles, and parametrize families of such triangles by plane algebraic curves. We study in detail families of triangles with two commensurable angles, and apply the theory of plane Cremona transformations to find ``Pythagorean theorems" for them, which are interpreted as the implicit equations of their parametrizing curves.
 
\bigskip

\textbf{Keywords}: Pythagorean triples, Pythagorean theorem, commensurable triangle, Chebyshev polynomials, rational curve, quadratic Cremona transformation, base point, exceptional curve
\end{abstract}

\section{Introduction}\label{S1}

The use of algebraic geometry to study families of triangles has ancient roots. Back around 250 AD in the famous problem II.8 of his Arithmetica (on the margins of which Fermat wrote his famous comment) Diophantus introduced a trick for finding right triangles with three integer sides, the Pythagorean triples. This trick will later be interpreted (starting with Kronecker's 1901 algebra textbook, according to \cite{Lem}) as constructing a rational parametrization of the unit circle, an algebraic curve parametrizing the family of right triangles up to similarity. Around 940 AD Al Khazen proposed a problem of finding right triangles with rational sides and integer area, and proved that the latter is a congruent number. In modern terms, the problem amounts to finding rational points on some elliptic curves. Other families of triangles with rational sides are actively studied today using elliptic and higher genus curves \cite{Rus,SaSh}. 

In this paper we will apply to triangles the classical algebraic geometry of plane curves and Cremona transformations. It was developed in the works of 19th century authors such as Pl\"ucker, Cayley, Cremona, Clebsch and Max Noether before the onset of a more abstract modern approach after Hilbert, see \cite{Cob} for a historical survey, and \cite{Fult,Gib,SenWin,Wal} for modern introductions. It is attractive due to its more intuitive flavor, especially when applied to elementary geometry of triangles. Since algebraic geometry works best over the field of complex numbers it is helpful to expand the notion of triangle accordingly, hence the ``imaginary triangles" of the title.

Specifically, we will use algebraic geometry to study what we call ${p:q}$ triangles, defined similarly to the isosceles triangles, but with the base angles in an integer ratio ${p:q}$. As with the right triangles, parametrized by a conic, their parametrizing curves are rational. In fact, they can be parametrized by the Chebyshev polynomials (of the second kind), so we call them the Chebyshev curves. As with the right triangles, one can look for triples of integers that can be sides of ${p:q}$ triangles (``Pythagorean triples"), or for algebraic relations among those sides (``Pythagorean theorems"). These problems can be naturally interpreted as looking for rational points on, and implicit equations of, the Chebyshev curves. 

Despite the classical flavor of the problems to the best of the author's knowledge such triangle families were first studied only in 1954 by Oppenheim (${1:3}$ and ${2:3}$ cases, see \cite{Guy}). Later Oppenheim, together with Daykin, explicitly characterized primitive integer triples for all ${p:q}$ families, it seems fair to call them the Oppenheim triples. Their result was published in the Monthly back in 1967 \cite{DO}, but special cases and related results were rediscovered later multiple times, see \cite{Bick,CY,Desh,Hoyt,Kosh,Luth,Nic,Par,Selk,Will}. However, almost all attention went to the Oppenheim triples, while the ``Pythagorean theorems", and algebro-geometric connections, were largely overlooked. We hope to remedy this oversight. 

\section{Imaginary triangles and SSS}\label{S2}

Trigonometry literally translates from Greek as ``the measurement of triangles". But sine and cosine functions extend to complex values, and the trigonometric formulas continue to hold for them. It turns out that even some facts about the ordinary Euclidean triangles are best explained by looking at complex values. But what sorts of ``triangles" would have complex sides and angles?

The principal relations between sides and angles of the ordinary triangles are given by the laws of sines and cosines, so we should make sure that they continue to hold. Since trigonometric functions are $2\pi$ periodic, even for complex values, we should identify angles differing by a multiple of $2\pi$. Moreover, because cosines are even functions, and the overall sign change in the angles does not alter the law of sines, we should identify triples of angles differing by the overall sign change. This leads to the following definition.
\begin{definition} Let $\alpha,\beta,\gamma\in\C$ represent classes modulo $2\pi$ with $\alpha+\beta+\gamma=\pi\\ (\!\!\!\!\mod2\pi)$, and let $[-\alpha,-\beta,-\gamma]\sim[\alpha,\beta,\gamma]$. Denote by $\Lambda$ the resulting set of equivalence classes $[\alpha,\beta,\gamma]$. An {\bf imaginary triangle} (with ordered sides) is a pair $\big((a,b,c);[\alpha,\beta,\gamma]\big)\in\C^3\times\Lambda$ of sides and ``opposite" angles, such that the laws of sines and cosines hold for them. We call $c$ the base of the triangle, and $\alpha,\beta$ the {\bf base angles}.
\end{definition}
The imaginary triangles can be interpreted as living in $\C^2$ with sides and angles ``measured" using a bilinear form on it ({\it not} a Hermitian one), which extends the inner product on $\R^2$. This is nicely described in Kendig's paper \cite{Ken}. Our definition is slightly more refined since the bilinear form only defines complex valued sides up to sign. But even on our definition we can not get the side triples to cover all of $\C^3$. Suppose $c=0$, for example, then by the law of cosines $b^2=a^2$, and so $b=\pm a$. Any zero-side triangle must be either isoceles or ``anti-isosceles"! In particular, if an imaginary triangle has two zero sides then all three of them are zero. We will show, however, that this is the only restriction on the sides (Theorem \ref{SSS}).

But first let us look at zero-area triangles. Recall that $A=\frac12bc\sin\alpha=\frac12ac\sin\beta=\frac12ab\sin\gamma$ gives the area of an (ordinary) triangle. The law of sines $a:b:c=\sin\alpha:\sin\beta:\sin\gamma$ insures that all three expressions give the same value, even for imaginary triangles. But as long as we exclude the zero-side triangles, having
$\sin\alpha=0$, say, forces $\sin\beta=\sin\gamma=0$. In other words, zero-area triangles with non-zero sides can only have angles that are $0$ or $\pi$, see Fig.\ref{Patho}\,(a). But then by the law of cosines 
$a^2=b^2+c^2\pm2bc=(b\pm c)^2$ and $a\pm b\pm c=0$ for at least one choice of signs. This means that a triangle is zero-area if and only if
\begin{equation}\label{discrim}
\Delta:=(a+b+c)(-a+b+c)(a-b+c)(a+b-c)=4s(s-a)(s-b)(s-c)=0,
\end{equation}
where $s:=\frac12(a+b+c)$. One can show using the remaining laws of cosines, that the factors in \eqref{discrim} correspond to $[\pi,\pi,\pi]$, $[\pi,0,0]$, $[0,\pi,0]$ and $[0,0,\pi]$ angle triples, respectively. If the first of them looks impossible, recall that $3\pi=\pi\,(\!\!\!\!\mod2\pi)$. The second product in \eqref{discrim} should look familiar, it is $4$ times the expression under the square root in the ``Heron" area formula (likely due to Archimedes). So for ordinary triangles $\Delta=16A^2$. 

Now let us turn to the angles. Any pair $\alpha,\beta$ can serve as the base angles of an imaginary triangle, indeed $\gamma=\pi-\alpha-\beta$, $a=\sin\alpha$, $b=\sin\beta$, $c=\sin(\alpha+\beta)$ define one. However, this may produce a triangle with all sides equal to zero. This will not happen if at least one of $\alpha,\beta$ is neither $0$ nor $\pi$, and in that case the law of sines implies the law of cosines. To see this note that by the law of sines there is a $z$ such that $a=z\sin\alpha$, 
$b=z\sin\beta$, $c=z\sin(\alpha+\beta)$, and use the lesser known identity
\begin{equation}\label{SinSq}
\sin(\alpha+\beta)\sin(\alpha-\beta)=\sin^2(\alpha)-\sin^2(\beta).
\end{equation}
We will now prove a generalization of the side-side-side theorem (SSS) to imaginary triangles. The elementary SSS states that triangle's angles can be uniquely recovered from its sides, and gives a geometric construction of them. The uniqueness can not hold for the zero-side triangles though. If $c=0$, for example, and $a=b$ then $[\alpha,\pi-\alpha,0]$ would validate the law of sines for any $\alpha\in\C$ since $\sin\alpha=\sin(\pi-\alpha)$, see Fig.\,\ref{Patho}\,(a). And if $a=-b$ then $[\alpha,-\alpha,\pi]$ would do it. But even with the zero-side triangles excluded, we can not use the usual geometric constructions to prove SSS. Let us turn to complex analysis instead. 

\begin{figure}[!ht]
\vspace{-0.1in}
\begin{centering}
(a)\ \ \ \includegraphics[width=1.25in]{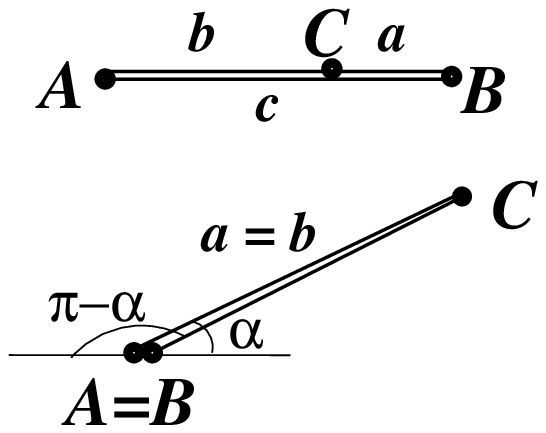} \hspace{0.2in} (b)\ \ \ \includegraphics[width=2.7in]{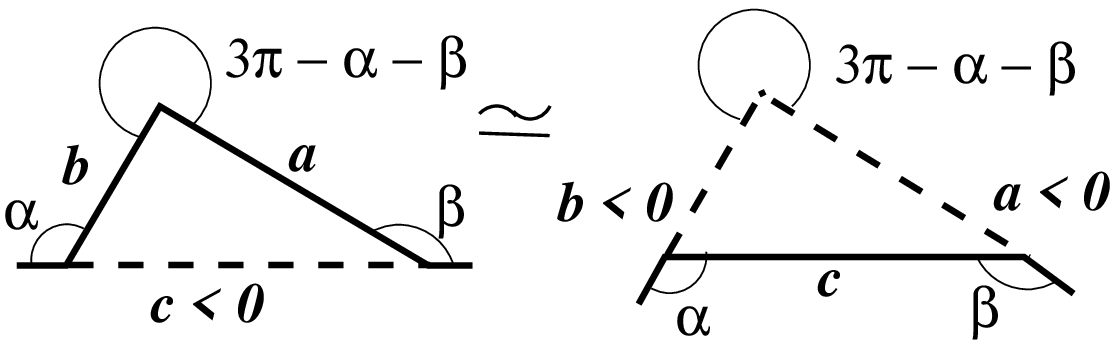}
\par\end{centering}
\vspace{-0.2in}
\hspace*{-0.1in}\caption{\label{Patho} (a) Zero-area and zero-side triangles (double lines are for visualization only); (b) Similar triangles with one and two negative sides.}
\end{figure}
Given three sides we can try to recover the angles by using the area formulas as equations: $\sin\alpha=\frac{2A}{bc}$ and $\sin\beta=\frac{2A}{ac}$. It follows from complex analysis that the system $\sin z=w$, $\cos z=\widetilde{w}$ has a solution $z\in\C$ if and only if $w^2+\widetilde{w}^2=1$, and this solution is unique modulo $2\pi$. In particular, $\sin z=w$ is solvable for any 
$w\in\C$, and the solution's class modulo $2\pi$ is uniquely determined by a choice of value for $\sqrt{1-w^2}$.
This implies that the angles (or rather their classes in $\Lambda$) are determined uniquely by non-zero sides. 
\begin{theorem}[{\bf SSS for imaginary triangles}]\label{SSS} Let $(a,b,c)\in\C^3$ and $a,b,c\neq0$. Then there is a unique $[\alpha,\beta,\gamma]\in\Lambda$  such that $\big((a,b,c);[\alpha,\beta,\gamma]\big)$ is an imaginary triangle. If one of 
$a,b,c$ is $0$ an imaginary triangle with these sides exists if and only if the other two are equal up to sign, and there are infinitely many such triangles.
\end{theorem}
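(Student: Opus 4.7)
The plan is to execute the complex-analytic strategy sketched just before the theorem. For three nonzero sides, I would define $\cos\alpha := (b^2+c^2-a^2)/(2bc)$ from the law of cosines, and analogously for $\cos\beta$ and $\cos\gamma$. The Pythagorean identity, together with a routine factoring of $1 - \cos^2\alpha$, yields $\sin^2\alpha = \Delta/(4b^2c^2)$, with analogous expressions for $\sin^2\beta$ and $\sin^2\gamma$. Fixing one square root $\sqrt{\Delta}$ once and for all, I would set $\sin\alpha := \sqrt\Delta/(2bc)$, $\sin\beta := \sqrt\Delta/(2ac)$, $\sin\gamma := \sqrt\Delta/(2ab)$. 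This common sign choice is precisely what makes the law of sines $a : b : c = \sin\alpha : \sin\beta : \sin\gamma$ hold with consistent signs (using $a,b,c\neq 0$). The cited solvability of $\sin z = w$, $\cos z = \widetilde{w}$ under $w^2 + \widetilde{w}^2 = 1$ then lifts each pair to a unique $\alpha,\beta,\gamma \in \C/(2\pi\Z)$.

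To complete existence I would verify the constraint $\alpha+\beta+\gamma \equiv \pi \pmod{2\pi}$. This reduces to checking $\cos(\alpha+\beta) = -\cos\gamma$ and $\sin(\alpha+\beta) = \sin\gamma$, both obtained by expanding the addition formulas with the explicit expressions above. The cosine check boils down to the polynomial identity $(b^2+c^2-a^2)(a^2+c^2-b^2) - \Delta = -2c^2(a^2+b^2-c^2)$, while the sine check collapses even more directly. Together these force $\alpha+\beta \equiv \pi - \gamma \pmod{2\pi}$. For uniqueness, the only freedom in the construction is the sign of $\sqrt\Delta$; flipping it negates all three sines while fixing the cosines, sending $[\alpha,\beta,\gamma]$ to $[-\alpha,-\beta,-\gamma]$, and these are identified in $\Lambda$ by definition.

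For the degenerate case, suppose without loss of generality that $c=0$. The law of cosines applied to side $a$ forces $a^2 = b^2$, so $a = \pm b$ is necessary. For sufficiency: when $a=b$, the triple $[\alpha,\pi-\alpha,0]$ satisfies the laws of sines and cosines for every $\alpha\in\C$ (the law of sines collapses to the automatic $\sin\alpha = \sin(\pi-\alpha)$, and the law of cosines is immediate); when $a=-b$, the triple $[\alpha,-\alpha,\pi]$ works by the analogous check. Distinct $\alpha$, modulo the $\pm$ identification in $\Lambda$, give distinct classes, so there are infinitely many such triangles, as claimed.

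The only genuine obstacle I anticipate is the polynomial identity underlying the cosine-addition step; it is elementary but must be expanded with some care. Sign tracking across the three sines is the other delicate point, but it collapses to the single global choice of $\sqrt\Delta$, which is precisely what the $[\alpha,\beta,\gamma]\sim[-\alpha,-\beta,-\gamma]$ equivalence in $\Lambda$ is designed to absorb.
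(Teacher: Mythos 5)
Your proof is correct, and it is a genuinely different (essentially dual) organization of the same complex-analytic strategy. The paper starts from the sines: it recovers the area $A$ from $\Delta$, solves $\sin\alpha=\frac{2A}{bc}$, $\sin\beta=\frac{2A}{ac}$, and the choice to be pinned down is that of the two \emph{cosines}; the identity \eqref{SqrtValues} (equivalent to \eqref{discrim}) selects the unique pair of square roots, after which $\gamma:=\pi-\alpha-\beta$ makes the angle-sum condition automatic and only $\sin(\alpha+\beta)=\frac{2A}{ab}$ needs checking. You instead start from the cosines, which are canonically forced by the law of cosines, take all three sines from a single global branch of $\sqrt{\Delta}$, and then must \emph{verify} the angle-sum constraint via both addition formulas, which reduces to the polynomial identity $(b^2+c^2-a^2)(a^2+c^2-b^2)-\Delta=-2c^2(a^2+b^2-c^2)$ (which does check out). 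Your version buys two things: the sign bookkeeping collapses to one global choice of $\sqrt{\Delta}$, exactly matching the $[\alpha,\beta,\gamma]\sim[-\alpha,-\beta,-\gamma]$ identification, and you avoid the paper's separate case analysis for $A=0$ (where the paper cannot deduce the law of cosines from the law of sines and must argue through the factors of $\Delta$ directly); the paper's version buys a free angle-sum condition and only one addition formula to verify. One small point you should make explicit for uniqueness: it is not enough that the \emph{construction} has only the sign of $\sqrt{\Delta}$ as its freedom; you should note that any valid triple must have the prescribed cosines (by the law of cosines), hence sines equal to $\pm\sqrt{\Delta}/(2bc)$ etc., and that the law of sines forces the three signs to agree when $\Delta\neq 0$ (when $\Delta=0$ all sines vanish and the cosines alone determine the class). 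This is a one-line addition, not a gap in the approach. The degenerate zero-side case is handled exactly as in the paper's preceding discussion.
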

\begin{proof} 
Given $a,b,c$ we compute $\Delta$ from \eqref{discrim}. There is a sign ambiguity in recovering $A$ from $\Delta$, but that is taken care of by specifying triples of angles only up to the overall sign change. Suppose first that $A\neq0$. There are at most two solutions, modulo $2\pi$, to each of the equations $\sin\alpha=\frac{2A}{bc}$ and $\sin\beta=\frac{2A}{ac}$. Moreover, they correspond to a choice of value for $\sqrt{1-\big(\frac{2A}{bc}\big)^2}$ and $\sqrt{1-\big(\frac{2A}{ac}\big)^2}$, respectively. One can see that the identity 
\begin{equation}\label{SqrtValues}
\frac{2A}{ac}\sqrt{1-\Big(\frac{2A}{bc}\Big)^2}+\frac{2A}{bc}\sqrt{1-\Big(\frac{2A}{ac}\Big)^2}=\frac{2A}{ab}.
\end{equation}
becomes equivalent to \eqref{discrim} by moving one of the square roots to the right, squaring, isolating the other square root, and squaring again. Working backwards, we find a unique choice of square roots that makes \eqref{SqrtValues} hold. Hence we can determine $\alpha,\beta$ uniquely, and \eqref{SqrtValues} then implies that
$$
\sin\alpha\cos\beta+\sin\beta\cos\alpha=\sin(\alpha+\beta)=\frac{2A}{ab}. 
$$
This verifies the law of sines (and, therefore, the law of cosines since $A\neq0$). Thus, $[\alpha,\beta,\pi-\alpha-\beta]$ are the sought angles, unique by construction.

Now consider triangles with $A=0$. Then one of the factors in \eqref{discrim} is zero. Suppose $a=b+c$, for example. Then $a^2=b^2+c^2+2bc$, and $bc\neq0$, so $\cos\alpha=-1$ to satisfy the law of cosines. This also implies $\sin\alpha=0$ and $\alpha=\pi$. The other two cosines are similarly found to be $1$, so $\beta=\gamma=0$. Conversely, the laws of sines and cosines hold with these assignments. The other cases are analogous.
\end{proof}
Triangles with negative sides are visualized on Fig.\ref{Patho}\,(b). They occupy the same ``place" as ordinary triangles with absolute values of their sides, but are ``viewed" differently where measuring the angles is concerned. However, triangles that violate the triangle inequalities, even with all positive sides like 1,1,3, have complex valued angles. To visualize them one has to step outside of $\R^2$, see \cite{Ken}.

The theorem means that, excluding the zero side triangles, we can uniquely parametrize imaginary triangles by points $(a,b,c)\in\C^3$ with three planes removed ($a=0$, $b=0$, and $c=0$). Two lines in each of those planes (intersections with 
$c=\pm b$, $c=\pm a$ and $b=\pm a$, respectively) have points corresponding to multiple zero-side triangles due to angle indeterminacy. The rest of the excluded planes corresponds to no triangles at all. We will be mostly interested in the shapes, or similarity classes, of triangles, that is we will identify those of them that are the same up to scale. The triangle shapes are parametrized by triple ratios $[a:b:c]$. Such triples, with $[0:0:0]$ excluded, which form the complex projective plane $\C P^2$, see e.g. \cite{Gib}. Planes through the origin in $\C^3$ become projective lines in $\C P^2$, and lines through the origin in $\C^3$ become projective points, $a,b,c$ are called the {\bf homogeneous coordinates}.
\begin{definition}\label{Shape} We call the {\bf triangle shape plane}, or simply the shape plane, the subset of $\C P^2$ obtained by removing three projective lines ($a=0$, $b=0$, and $c=0$) except for two points on each, ($[0:1:\pm1]$, $[1:0:\pm1]$, and $[1:\pm1:0]$). 
\end{definition}
\begin{figure}[!ht]
\vspace{-0.1in}
\begin{centering}
(a)\ \ \ \includegraphics[width=1.5in]{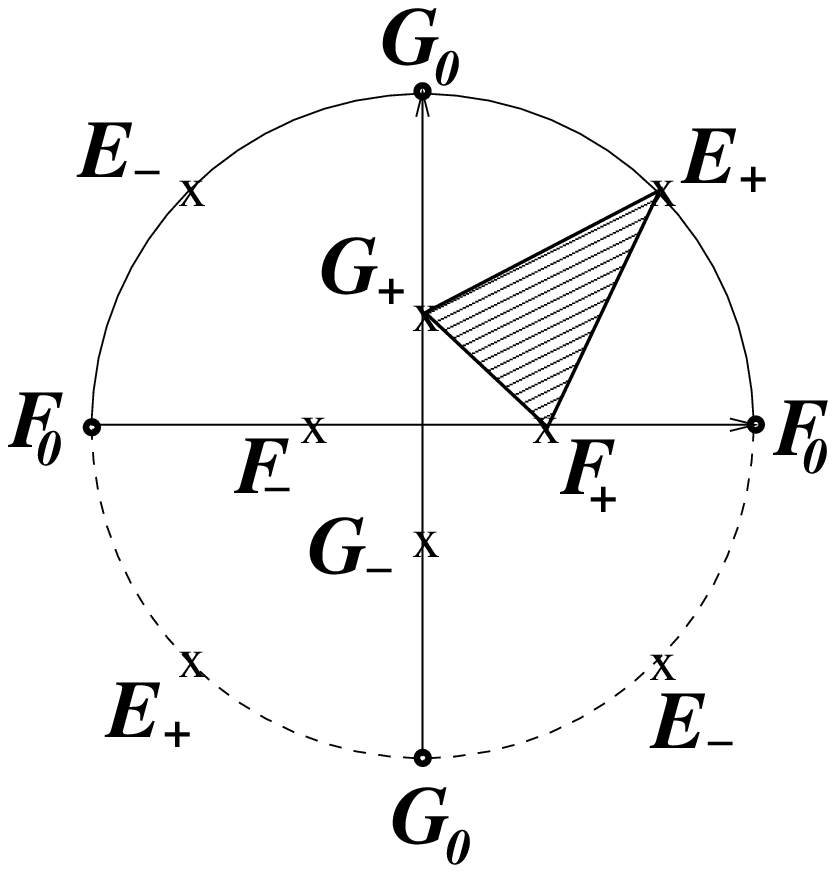} \hspace{0.7in} (b)\ \ \ \includegraphics[width=1.3in]{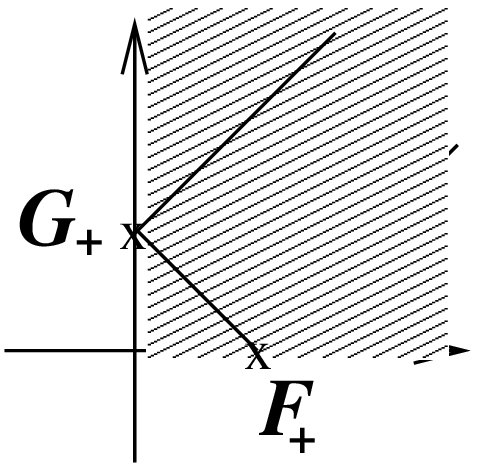}
\par\end{centering}
\vspace{-0.2in}
\hspace*{-0.1in}\caption{\label{RPGate} (a) Real projective plane of triangle shapes with the gate points and the Euclidean triangle, the lower semicircle is identified with the upper one; (b) Real affine plane of triangle shapes with the Euclidean half-strip.}
\end{figure}
To better understand $\C P^2$ it is instructive to look at its real slice, the real projective plane $\R P^2$. Most of it can be visualized by using the so-called affine coordinates, say $x:=\frac{b}{a}$, $y:=\frac{c}{a}$. These cover the ordinary plane $\R^2$, called the affine plane, and points with $a=0$ can be thought of as being at infinity. They form one of the lines excluded from the triangle shape plane, and the other two are exactly the $x$ and the $y$ axes. Two of the special points are on the line at infinity, and the four finite ones are $(\pm1,0)$ and $(0,\pm1)$. The points on the line at infinity represent directions of parallel lines in $\R^2$ (with opposite directions corresponding to the same point). We can visualize those points by adding to the plane an ``infinitely distant" circle ``enclosing" the entire affine plane. With antipodal points identified, it represents the line at infinity, see Fig.\,\ref{RPGate}\,(a).

One can see that each affine quadrant is actually a projective triangle with the coordinate half-axes and half of the line at infinity for sides. Moreover, each quadrant shares exactly one side with each of the other three, and there is exactly one point on each of those sides that is part of the real triangle shape plane. They correspond to the isosceles and the anti-isosceles zero-side triangles, and if not for them the real shape plane would be disconnected into four separate components.
\begin{definition}\label{GatePts} We denote $E_{\pm}:=[0:1:\pm1]$, $F_{\pm}:=[1:0:\pm1]$, $G_{\pm}:=[1:\pm1:0]$, and call them the {\bf gate points}. We also denote $E_0:=[1:0:0]$, $F_0:=[0:1:0]$, and $G_0:=[0:0:1]$.
\end{definition}
Note that even imaginary triangles with real sides may not be ``real" in the sense of elementary geometry, because they can have negative sides, or their sides may violate the triangle inequalities. Positivity of $a,b,c$ restricts us to the first quadrant, and rewriting the triangle inequalities in terms of $x$ and $y$ singles out a half-strip in it with $F_+$ and $G_+$ as corners, Fig.\,\ref{RPGate}\,(b). In the projective view the ``half-strip" is a projective triangle with the vertices 
$E_+$, $F_+$ and $G_+$, see Fig.\,\ref{RPGate}\,(a).

\section{Chebyshev curves}\label{S3}

The reason for introducing the triangle shape plane is that it streamlines the study of some natural families of triangles. The two most classical families are the right and the isosceles triangles, studied already by Pythagoreans. Both are defined by imposing a linear condition on the angles ($\alpha=\pi/2$ and $\beta=\alpha$, respectively), which implies an algebraic relation among their sides: the Pythagorean theorem for the right triangles, $a^2=b^2+c^2$, and the converse of Pons Asinorum for the isosceles ones, $b=a$. The corresponding curves in the affine shape plane are of the simplest kind, the unit circle and a vertical line, respectively. We will study the following generalizations of the isosceles family.
\begin{definition} Let $p,q$ be two relatively prime positive integers. We call an imaginary triangle $\boldsymbol{p:q}$ \textit{\textbf{triangle}} if ${\alpha:\beta}={p:q}$ for some angle representatives $\alpha,\beta$, or more precisely if $q\alpha-p\beta=0\ (\!\!\!\!\mod2\pi)$.
\end{definition}

Each pair $p:q$ defines a curve in the triangle shape plane. The corresponding curve can be parametrized using the law of sines. Indeed, if ${\alpha:\beta}={p:q}$ then $\alpha,\beta$ have a common measure $\theta\in\C$ such that $\alpha=p\theta$, $\beta=q\theta$ and $\gamma=\pi-(p+q)\theta$. The law of sines implies
$$
a:b:c=\sin(p\theta):\sin(q\theta):\sin(p+q)\theta,
$$
which is a parametrization of a curve in $\C P^2$ in homogeneous coordinates. For the purposes of algebraic geometry it is  ``bad" since it involves transcendental functions. But it can be transformed into a ``good" one using an observation of Chebyshev's \cite{CY} that the function $\frac{\sin(n+1)\theta}{\sin\theta}$ extends to a polynomial $U_n(\cos\theta)$, called the $n$-th \textit{\textbf{Chebyshev polynomial of the second kind}}. Moreover, $U_0=1$, $U_1=2t$ and $U_{n+1}=2tU_n-U_{n-1}$. 
The better known Chebyshev polynomials are those of the first kind, but here it is the second kind ones that take the center stage. Rewriting the law of sines in terms of $U_n$ gives the parametrization (with $t=\cos\theta$):
\begin{equation}\label{Upq}
a:b:c=U_{p-1}(t):U_{q-1}(t):U_{p+q-1}(t),\ t\in\C\,.
\end{equation}
In fact, it is more natural to let $t$ take values not in $\C$ but in $\C\cup\{\infty\}$. The $\infty$ maps to $G_0=[0:0:1]\in\C P^2$. This is the only point on (the closure of) our curve that does not correspond to any, even imaginary, triangle. And aside from the gate points it is the only point that all $p:q$ curves pass through, one could even say that they all ``begin" and ``end" at it, see Fig.\,\ref{ProjDiag}.
\begin{definition} We denote by $\CC_{p,q}$ the closure in $\C P^2$ of the curve parametrized in \eqref{Upq}, and call it the  $\boldsymbol{p:q}$ \textit{\textbf{Chebyshev curve}}. The point $G_0=[0:0:1]\in\CC_{p,q}$ is called the \textit{\textbf{source point}}.
\end{definition}
This definition will suffice for now, but we will later refine it to treat $\CC_{p,q}$ as algebraic curves rather than just parametrized point sets (Definition \ref{AlgCpq}). Note that because $U_n$ have integer coefficients rational values of $t$ are mapped into rational points (that is points with rational coordinates), and therefore produce integer triples that can be sides of (imaginary) triangles. It turns out that the converse is also true, irrational values of $t$ do not map into rational points. As noticed in \cite{CY}, this can be shown as follows. The law of cosines expresses $\cos(p\theta)$, $\cos(q\theta)$ and $\cos(p+q)\theta$ as rational functions of $a,b,c$ with $ab$, $ac$ and $bc$ in the denominators. One can then use Chebyshev polynomials (of both kinds) to express $t=\cos\theta$ as a polynomial in them. We leave the details as an exercise to the reader.

It follows that the only potential self-intersection points of $\CC_{p,q}$, that is points that distinct values of $t$ are mapped into, are the points where $a$, $b$ or $c$ are $0$, in other words, the gate points $E_{\pm}$, $F_{\pm}$ and $G_{\pm}$ (the source point $G_0$ can only correspond to $t=\infty$ because it does not represent a triangle). The trace of $\CC_{p,q}$ in $\R P^2$ is qualitatively determined by the order in which they are visited, as we shall see. The next theorem will allow us to find that order, as well as the self-intersection multiplicities of the gate points. 
\begin{theorem}\label{ParRec} As $t\in\R$ grows from $-\infty$ to $\infty$ on the Chebyshev curve $\CC_{p,q}$ the gate points $E_{\pm}$ are passed $p-1$ times ending with $E_-$ (at $t=\cos\frac{\pi}p$), $F_{\pm}$ are passed $q-1$ times ending with $F_-$ (at $t=\cos\frac{\pi}q$), and $G_{\pm}$ are passed $p+q-1$ times ending with $G_+$ (at $t=\cos\frac{\pi}{p+q}$). These are the only self-intersection points of $\CC_{p,q}$ (if any), and their multiplicities (number of distinct parameter values mapped into them) are given in Table \ref{GateMult}. When $t\to\pm\infty$ all Chebyshev curves approach the source point $G_0$.
\end{theorem}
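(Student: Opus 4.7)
The plan is to exploit the trigonometric interpretation of Chebyshev polynomials. For $t=\cos\theta$ with $\theta\in(0,\pi)$, one has $U_{n-1}(t)=\sin(n\theta)/\sin\theta$, so the parametrization \eqref{Upq} rewrites as
\begin{equation*}
a:b:c\,=\,\sin(p\theta):\sin(q\theta):\sin((p+q)\theta).
\end{equation*}
Because every root of $U_n$ lies in $(-1,1)$, no coordinate of the parametrization can vanish outside this range; thus every gate-point visit is detected by the trigonometric form. The coordinate $a$ vanishes exactly when $p\theta\in\pi\Z$ with $\sin\theta\neq0$, i.e.\ $\theta=k\pi/p$ for $k=1,\dots,p-1$, yielding the claimed $p-1$ visits to $E_\pm$; analogous counts $q-1$ and $p+q-1$ hold for $F_\pm$ and $G_\pm$.

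Next I decide at each zero which $\pm$ variant is actually hit, by signing the remaining ratio. At $\theta=k\pi/p$ the identity $\sin((p+q)\theta)=\sin(q\theta+k\pi)=(-1)^k\sin(q\theta)$ gives $c/b=(-1)^k$, so one lands on $E_+$ when $k$ is even and on $E_-$ when $k$ is odd; binning $k=1,\dots,p-1$ by parity recovers the self-intersection multiplicities of Table \ref{GateMult}. The same trick handles $F_\pm$, and for $G_\pm$ the identity $\sin(q\theta)=\sin(k\pi-p\theta)=(-1)^{k+1}\sin(p\theta)$ at $\theta=k\pi/(p+q)$ gives $b/a=(-1)^{k+1}$.

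The order in which these hits occur as $t$ grows from $-\infty$ to $\infty$ is controlled by the orientation-reversing substitution $t=\cos\theta$: as $t$ increases inside $(-1,1)$, $\theta$ decreases, and so does $k$ within each family. Hence the final visit in each family occurs at $k=1$, producing the parameters $\cos(\pi/p)$, $\cos(\pi/q)$, $\cos(\pi/(p+q))$, which by the parity computation above land on $E_-$, $F_-$, and $G_+$ respectively.

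Two residual claims remain. That no other self-intersections occur is exactly the recoverability remark made just before the theorem: whenever $abc\neq0$ the quantities $\cos(p\theta),\cos(q\theta),\cos((p+q)\theta)$ are rational functions of $(a,b,c)$, and Chebyshev polynomials of the first kind assemble them into a unique value of $t=\cos\theta$, so distinct $t$ cannot share an image. For the behavior at infinity, $U_n(t)$ has leading term $2^n t^n$, and dividing each coordinate by $t^{p+q-1}$ shows $[a:b:c]\to[0:0:1]=G_0$ as $t\to\pm\infty$. The one obstacle worth flagging is bookkeeping rather than depth: one must carefully distinguish the cases $p$ even versus odd (and similarly for $q$ and $p+q$) so that the parity splits between $E_+/E_-$, $F_+/F_-$, and $G_+/G_-$ agree with the tabulated multiplicities in Table \ref{GateMult}.
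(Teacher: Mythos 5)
Your proposal is correct and follows essentially the same route as the paper: locate the gate-point visits at $\theta=k\pi/p$ (resp.\ $k\pi/q$, $k\pi/(p+q)$), determine the $\pm$ sign from the $(-1)^k$ reflection identity so that the visits interlace and bin by parity to match Table \ref{GateMult}, note that $t=\cos\theta$ reverses orientation so the last visit is at $k=1$, and dispose of the remaining self-intersection and $t\to\pm\infty$ claims exactly as in the remarks preceding the theorem. Your explicit observation that all roots of $U_n$ lie in $(-1,1)$, so the substitution $t=\cos\theta$ misses no visits, is a small but welcome addition that the paper leaves implicit.
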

\begin{table}
\begin{centering}
  \begin{tabular}{| c | c | c | c | c | c | c |}
    \hline
          Parities    & $E_+$         &   $E_-$       & $F_+$         &     $F_-$     &      $G_+$    & $G_-$\\ \hline 
    $p$ odd, $q$ odd  & $\frac{p-1}2$ & $\frac{p-1}2$ & $\frac{q-1}2$ & $\frac{q-1}2$ & $\frac{p+q}2$ & $\frac{p+q}2-1$\\ 
    $p$ odd, $q$ even & $\frac{p-1}2$ & $\frac{p-1}2$ & $\frac{q}2-1$ & $\frac{q}2$ & $\frac{p+q-1}2$ & $\frac{p+q-1}2$\\
    $p$ even, $q$ odd & $\frac{p}2-1$ & $\frac{p}2$ & $\frac{q-1}2$ & $\frac{q-1}2$ & $\frac{p+q-1}2$ & $\frac{p+q-1}2$\\ \hline
  \end{tabular}
\par\end{centering}
\caption{Real self-intersection multiplicities of the gate points.}\label{GateMult}
\end{table}
\begin{proof}Recall that $E_\pm=[0:1:\pm1]$, so by the parametrization \eqref{Upq} $E_{\pm}$ is on the curve for $t$ such that
$U_{p-1}(t)=\frac{\sin(p\theta)}{\sin(\theta)}=0$.
This means that $p\theta=\pi k$ for integer $k$ while $\theta\neq\pi j$. Therefore $\theta_k=\frac{\pi k}p$ for $k=1,\dots,p-1$ produces all the possible values for $t_k=\cos\theta_k$. Note also that $\sin(p+q)\theta_k=(-1)^k\sin(q\theta_k)$, and therefore $U_{p+q-1}(t_k)=(-1)^kU_{q-1}(t_k)$ according to \eqref{Upq}. In other words, as $k$ increases from $1$ to $p-1$ the passages through $E_+$ and $E_-$ interlace, with $E_-$ being first in $k$, but last in $t$. This gives the first two columns of Table \ref{GateMult}. The other four are analogous.
\end{proof}
Once the order of passage through the gate points has been determined the real projective trace of $\CC_{p,q}$ can be easily sketched. It is convenient to first find the ``code" of a curve, the list of gate points in the order of passage.
\begin{figure}[!ht]
\begin{centering}
(a)\ \ \ \ \includegraphics[scale=0.5]{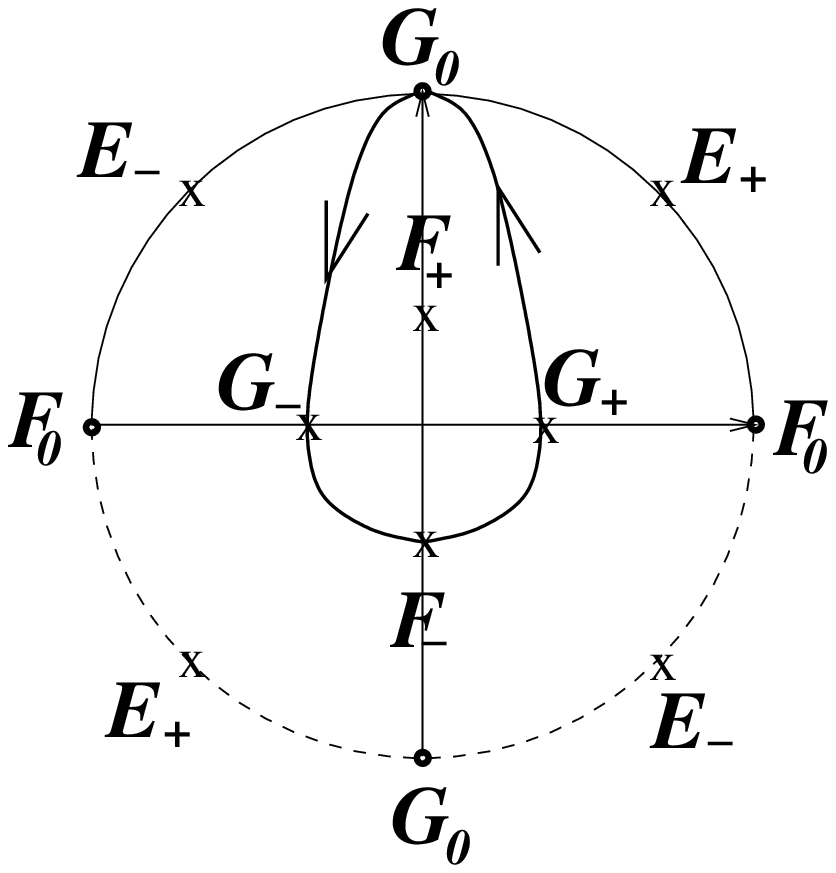} \hspace{0.5in} (b)\ \ \ \ \includegraphics[scale=0.5]{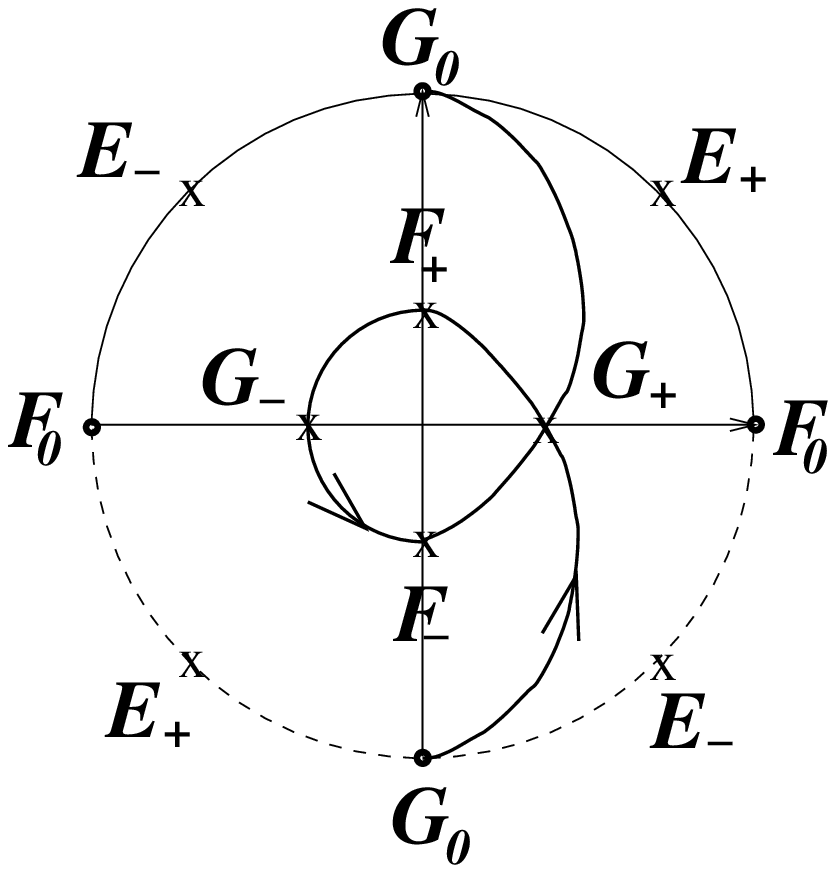}

(c)\ \ \,\ \ \ \ \includegraphics[scale=0.5]{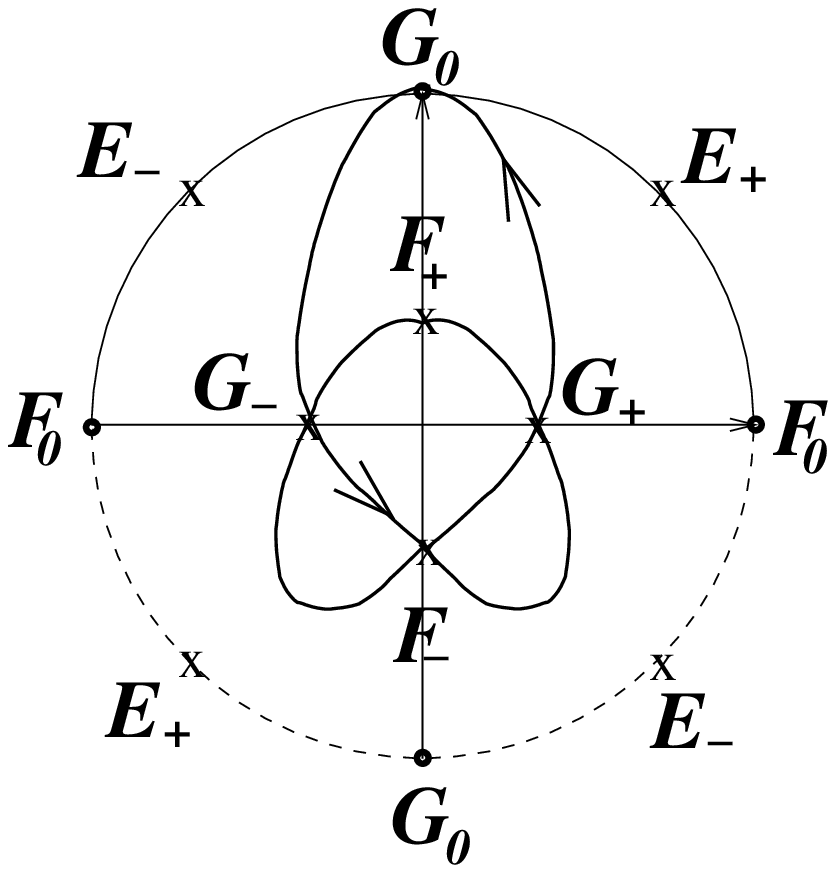} \hspace{0.5in} (d)\ \ \ \ \includegraphics[scale=0.5]{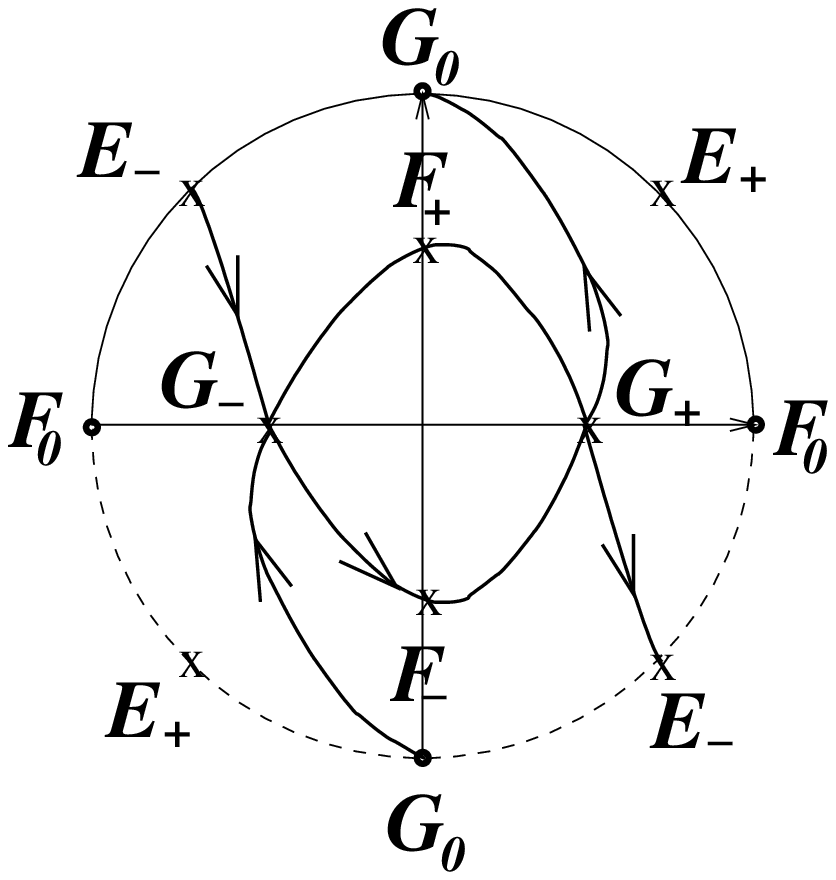}
\par\end{centering}
\caption{\label{ProjDiag}Real projective diagrams of (a) $1:2$, (b) $1:3$ (c) $1:4$, and (d) $2:3$ curves.}
\end{figure}

{\bf Examples:} For the $1:2$ curve $E_\pm$, $F_+$ are not passed at all, $F_-$ is passed at $\theta=\frac\pi2$ and $G_\pm$ at $\theta=\frac\pi3, \frac{2\pi}3$. We have the table:
$$
E_\pm: \emptyset\,;\ \ \ \ F_\pm: \frac\pi2 -\,, \ \ \ \ G_\pm: \frac\pi3 +, \frac{2\pi}3 -\,;
$$
where the signs are easily assigned since they alternate, and we know the first sign for each pair of points. So the code is 
$G_0G_-F_-G_+G_0$ in the increasing order of $t$ (decreasing of $\theta$), see Fig.\ref{ProjDiag}. Similarly, for the $1:3$ curve the code is $G_0G_+F_+G_-F_-G_+G_0$, and for the $2:3$ curve it is $G_0G_-F_+G_+E_-G_-F_-G_+G_0$. Note that in the affine view $x=\frac{b}{a}$, $y=\frac{c}{a}$ curves passing through $E_\pm$ will look like having separate branches (intersecting at the other gate points), and approaching $G_0$ will look like approaching the vertical direction towards infinity (although not a vertical asymptote), see Fig. \ref{MapleGraphs}.
\begin{figure}[!ht]
\begin{centering}
(a)\ \ \ \ \includegraphics[scale=0.15]{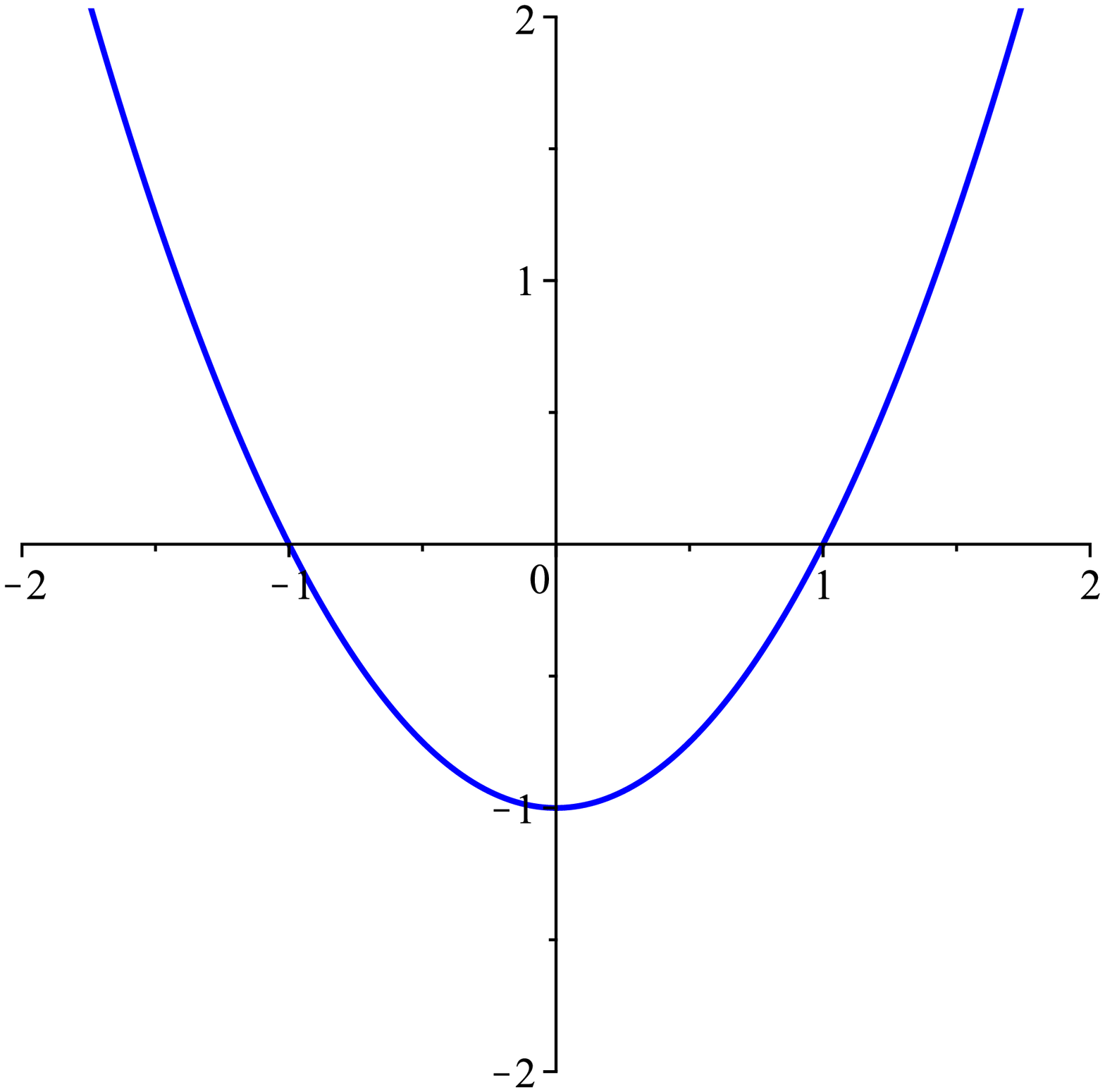} \hspace{0.2in} (b)\ \ \ \ \includegraphics[scale=0.15]{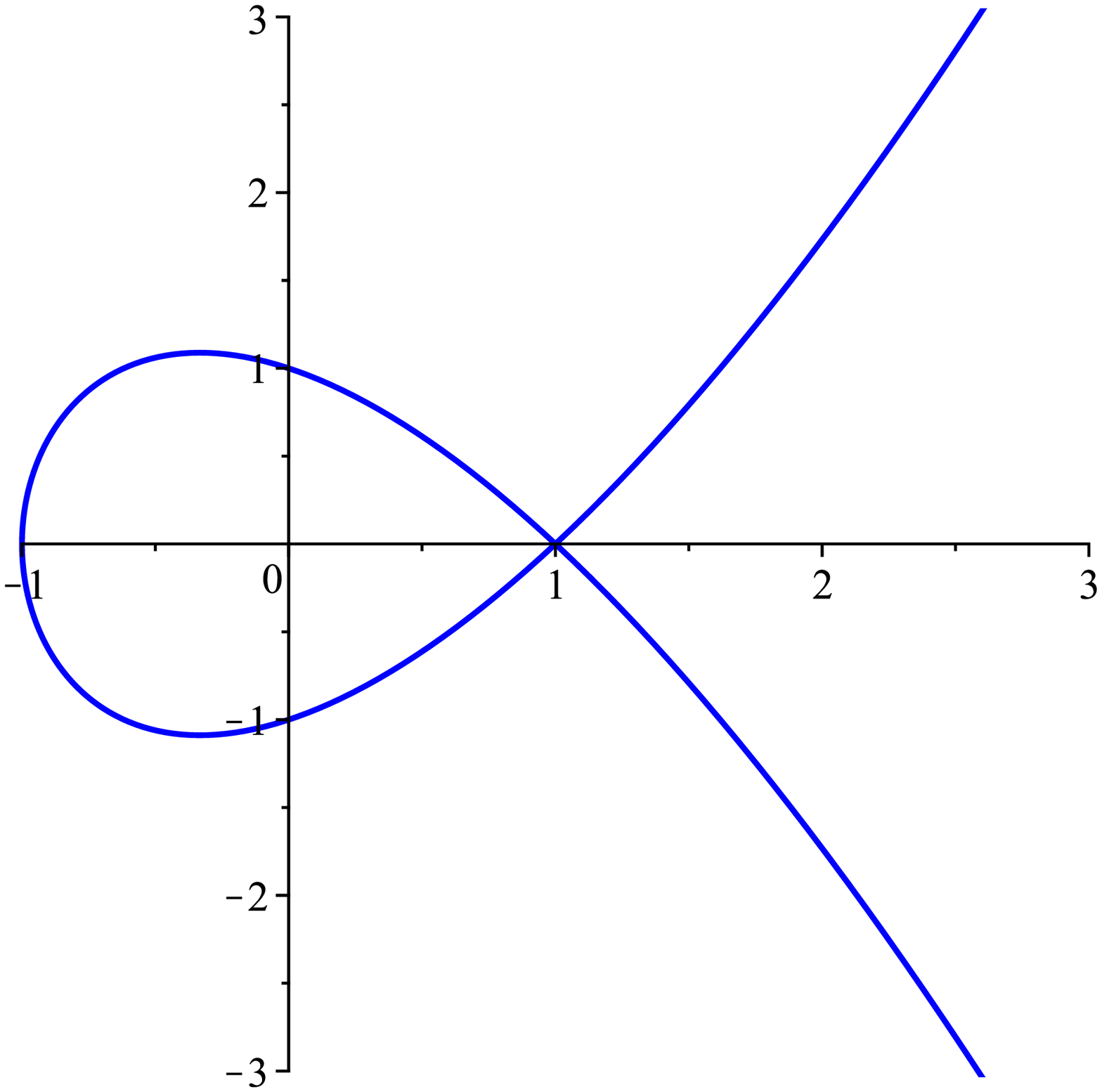}
(c)\ \ \ \  \includegraphics[scale=0.15]{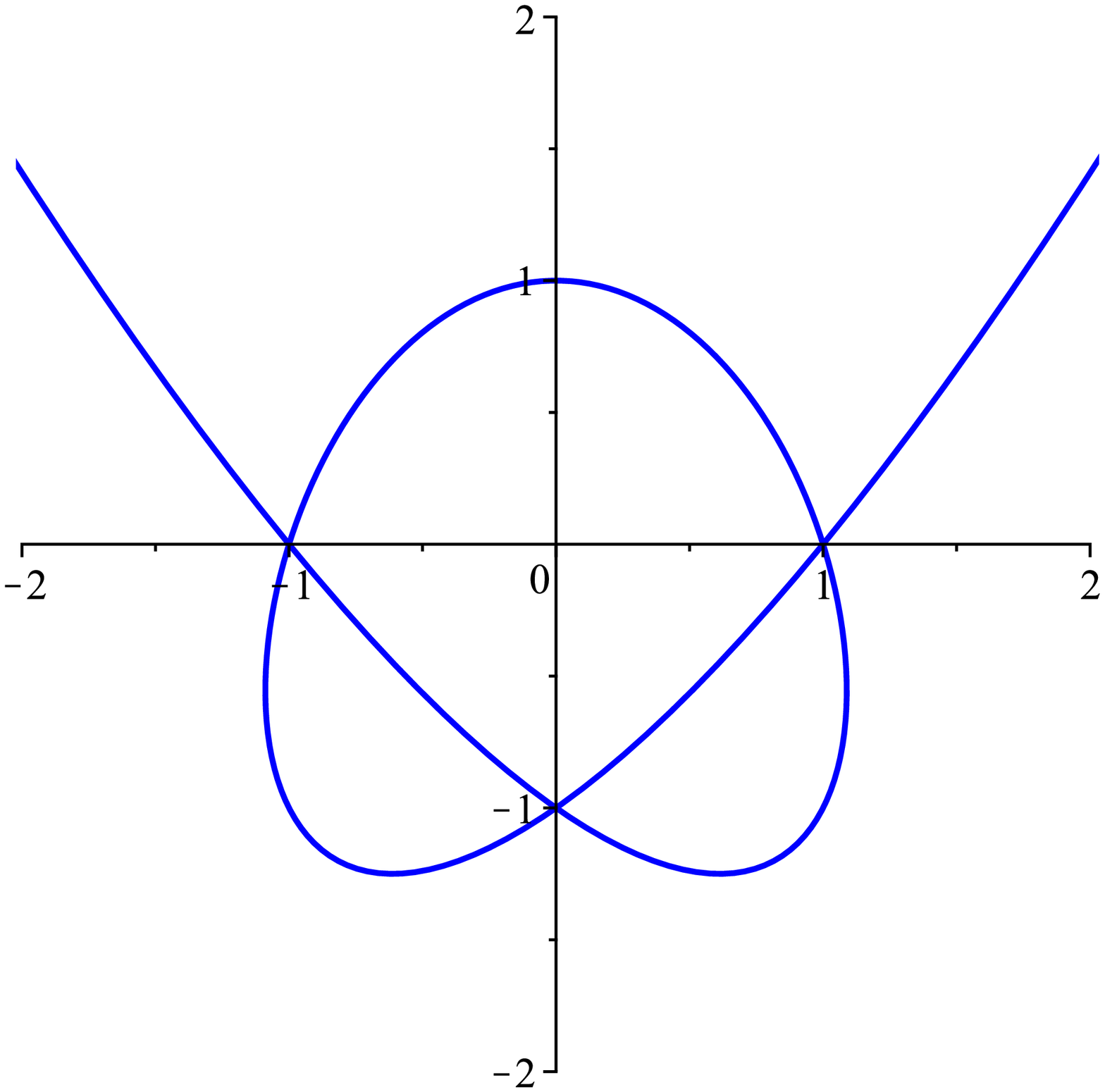} \hspace{0.2in}\\ (d)\ \ \ \ \includegraphics[scale=0.15]{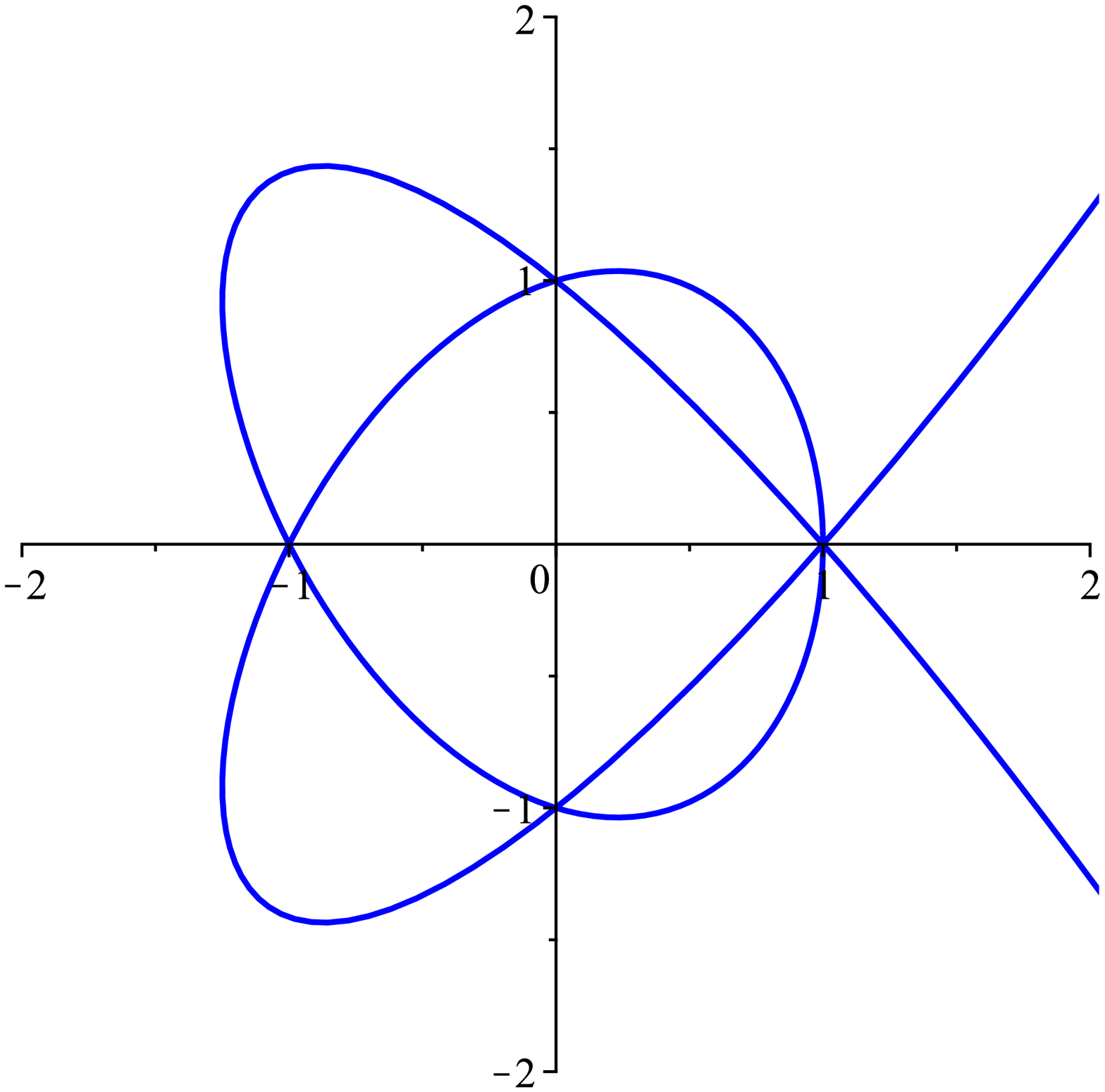}
(e)\ \ \ \  \includegraphics[scale=0.15]{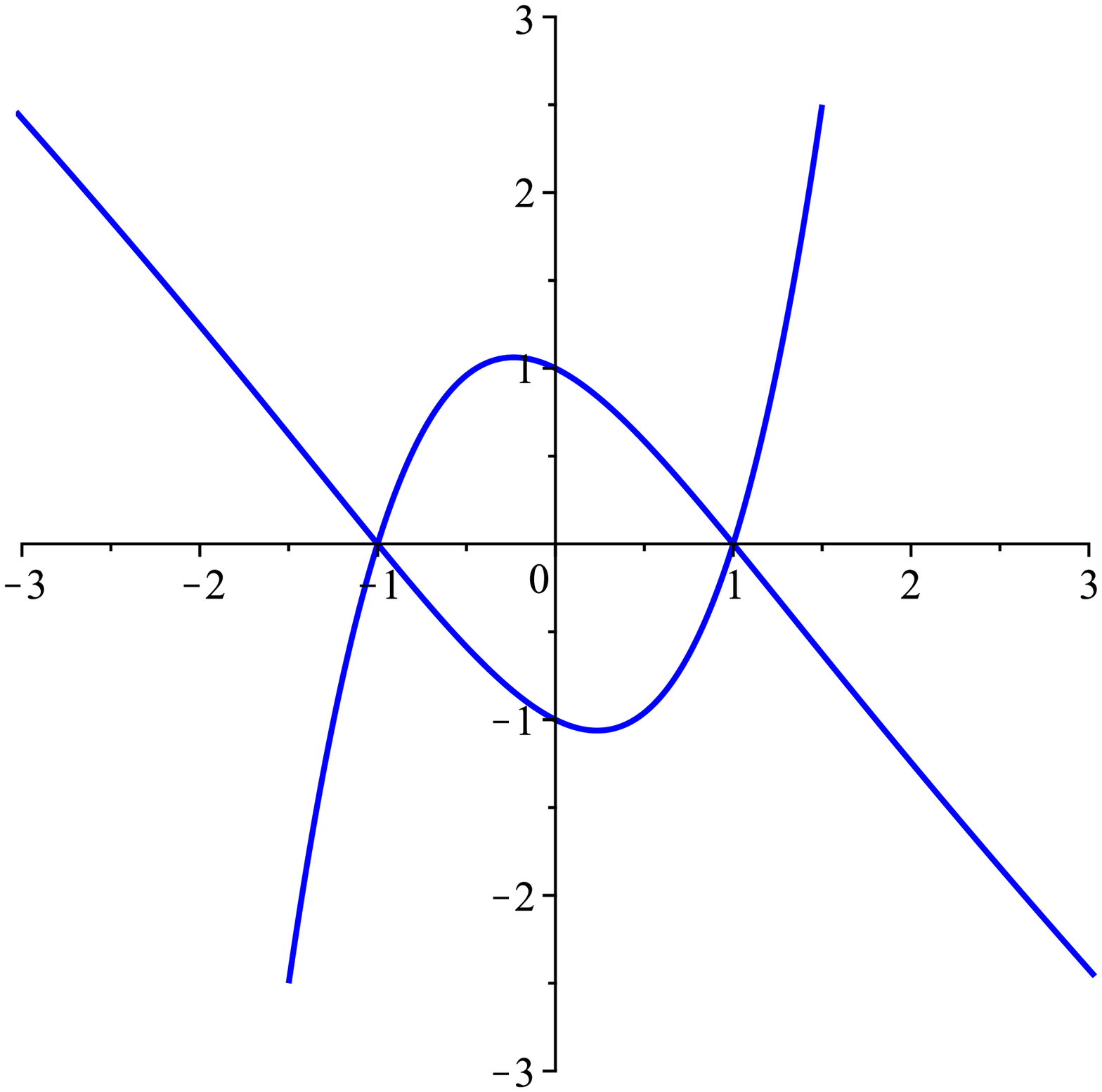} \hspace{0.2in} (f)\ \ \ \ \includegraphics[scale=0.15]{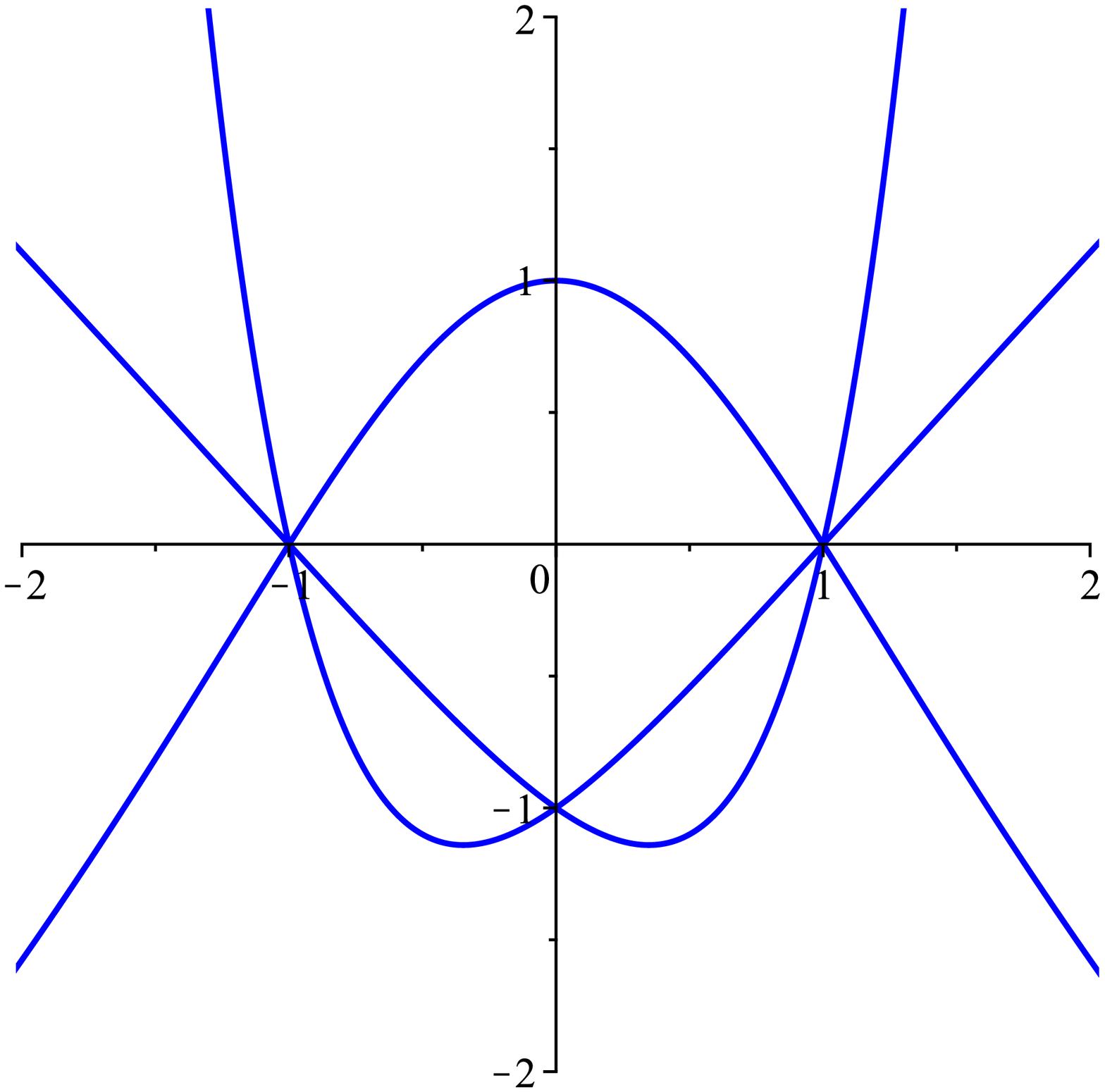}
\par\end{centering}
\caption{\label{MapleGraphs}Real affine graphs of (a) $1:2$ (b) $1:3$ (c) $1:4$ (d) $1:5$ (e) $2:3$ and (f) $3:4$ curves, drawn by Maple.}
\end{figure}

\section{Heron transformation and its inverse}\label{S4}

Parametrizations are central in differential geometry, but in algebraic geometry implicit equations are more commonly used. The Pythagorean theorem can be written as a homogeneous implicit equation $b^2+c^2-a^2=0$ that defines an algebraic curve in 
$\C P^2$ with $a,b,c$ as homogeneous coordinates. This curve is a non-degenerate conic, which can be rendered as an ellipse (circle) or a hyperbola, depending on a choice of affine coordinates. With the choice we are using, $x=\frac{b}{a}$, $y=\frac{c}{a}$, it becomes the unit circle $x^2+y^2-1=0$. We can expect something similar for our $p:q$ triangles. In algebraic terms, we are looking for implicit equations $f_{p,q}(a,b,c)=0$ of the Chebyshev curves $\CC_{p,q}$. For $\CC_{1,2}$ it can be found by inspection from the parametrization \eqref{Upq}: we have $\frac{b}{a}=t$ and $\frac{c}{a}=t^2-1$, so $b^2-a^2-ac=0$. But already for $\CC_{1,3}$ the equation is not so obvious. Fortunately, there is a recursion for computing $f_{p,q}$ with nice interpretations in both elementary and algebraic geometry. It involves transforming triangles with the base angles $\alpha,\beta$ into those with $\alpha,\beta-\alpha$. 

We start with the elementary interpretation. Consider a triangle $\triangle ABC$ with the base angles $\alpha$, $\beta$, and from the vertex $C$ opposite the base drop a segment $CB'$ on it so that $\triangle B'CB$ is isosceles, Fig. \ref{Heron}. By inspection, $\triangle ACB'$ has the base angles $\alpha$, $\beta-\alpha$, which means that we can transform $p:q$ triangles into $p:q-p$ ones this way.
\begin{figure}[!ht]
\vspace{-0.05in}
\begin{centering}
\includegraphics[width=1.7in]{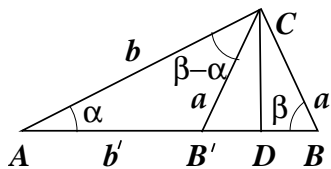} 
\par\end{centering}
\vspace{-0.3in}
\hspace*{-0.1in}\caption{\label{Heron} Heron transformation of an ordinary triangle.}
\end{figure}
Let us see how the transformation acts on the sides $a,b,c$ of $\triangle ABC$ opposite $A,B,C$. If $a',b',c'$ are the corresponding sides of $\triangle ACB'$ then $a'=a$ and $c'=b$ by inspection. To find $b'$ drop the perpendicular $CD$ to $AB$ and apply the Pythagorean theorem twice, to $\triangle ACD$ and $\triangle BCD$. This gives $b'=\frac{b^2-a^2}c$. Since we only care about triangles up to scaling we can homogenize the transformation, i.e. take $ac$, $b^2-a^2$ and $bc$ as the new sides.
\begin{definition} The {\bf Heron transformation} is the rational map $H:\C P^2\dashrightarrow\C P^2$ given in homogeneous coordinates by $H([a:b:c])=[ac:b^2-a^2:bc]$. We will occasionally abuse notation by denoting the (non-homogenized) Heron transformation also by $H:\C^3\to\C^3$.
\end{definition}
The above elementary construction of $H$ appears in \cite{Bick,Par,Selk}, all three papers use it to find the Oppenheim triples recursively. R.G. Rogers pointed out to the author that the construction is reminiscent of Heron's in his Metrica, hence the name.

Now let us turn to algebraic geometry. The reason for the dashed arrow in the definition is that $H$ is not defined on quite all of $\C P^2$. It is of course always defined as a map $\C^3\to\C^3$, but if the image of a point is $(0,0,0)$ then there is no point in $\C P^2$ that corresponds to it since $[0:0:0]$ is not in $\C P^2$. One can see that this happens if and only if $a=b=0$ or $c=0$ and $b=\pm a$. Thus, the source point $G_0=[0:0:1]$ and the gate points $G_\pm=[1:\pm1:0]$ are the only points of $\C P^2$ not in the domain of $H$. But $f\circ H$ is everywhere defined (for non-homogenized $H$), and it is a homogeneous polynomial if $f$ is. 

$H$ is not one-to-one or onto either. So it may come as a surprise that despite all that it has an explicit (almost) inverse, and a rational one at that, no square roots! The reason is that on $\C P^2$ we only invert up to scale. Indeed, if $H([a:b:c])=[u:v:w]$ then $\frac{b}{a}=\frac{w}{u}$ and $\frac{b+a}{c}=\frac{v}{w-u}$, so $\frac{c}{a}=\frac{w^2-u^2}{uv}$. 
\begin{definition} The {\bf inverse Heron transformation} is the rational map $H^{-1}:\C P^2\dashrightarrow\C P^2$ given in homogeneous coordinates by $H([u:v:w])=[uv:vw:w^2-u^2]$.
\end{definition}
The ``inverse Heron transformation" should be understood as an idiom, it is not a set-theoretic inverse of $H$ on $\C P^2$. In elementary terms, it transforms triangles with the base angles $\alpha$, $\beta$ into ones with the base angles $\alpha$, $\beta+\alpha$. Like $H$ it is undefined at three points, namely $F_0=[0:1:0]$ and $F_\pm=[1:0:\pm1]$. When we look for their pre-images under $H$ we find that those are entire lines $c=0$ and $b=\pm a$, respectively. Reciprocally, $H^{-1}$ maps the lines $v=0$ and $w=\pm u$ into the points where $H$ is undefined, $G_0$ and $G_\pm$. So ``undefined" should not be taken to mean that those points go nowhere. One can even show that when a curve approaches one of them, its image under $H$ approaches a point on the corresponding line, different tangents of approach corresponding to different points \cite[II.I.1]{Cool}. It is said that $H$ and $H^{-1}$ {\it blow up} three points into three lines, and {\it blow down} three lines into three points \cite[3.2]{SenWin}, see Fig.\,\ref{Blow}.

$H$ is an example of quadratic {\bf plane Cremona transformation}. The points where it is undefined are called its {\bf base points}, and the curves (lines) that it maps into points its {\bf exceptional lines} \cite{Alb}. Points on the exceptional lines are called exceptional and form the {\bf exceptional locus}.
\begin{figure}[!ht]
\begin{centering}
\includegraphics[scale=0.5]{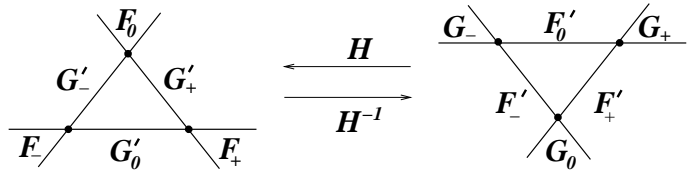}
\par\end{centering}
\caption{\label{Blow}Heron transformation diagram: blow ups and blow downs.}
\end{figure}
In classical works the base points are called fundamental, and the exceptional curves fundamental or principal 
\cite{Cob,Cool}. So $H$ blows up its base points into the exceptional lines of $H^{-1}$, and blows down its exceptional lines into the base points of $H^{-1}$. The action of $H$ and $H^{-1}$ is depicted schematically on Fig. \ref{Blow}, where the lines are labeled by (primed) points they are blown down to. The two maps are the set-theoretic inverses of each other on $\C P^2$-s with the exceptional loci removed.

If $[a:b:c]\in\C P^2$ represents an imaginary triangle with the base angles $\alpha,\beta$, and is not a gate point or exceptional for $H$ ($b\neq\pm a$), then $H([a:b:c])$ represents an imaginary triangle with the base angles $\alpha,\beta-\alpha$. Analogously, if $[a:b:c]$ is non-gate and non-exceptional for $H^{-1}$ then $H^{-1}([a:b:c])$ represents an imaginary triangle with the base angles $\alpha,\beta+\alpha$. In generic cases this can be seen by noticing that if $a:b:c=\sin\alpha:\sin\beta:\sin(\alpha+\beta)$ and $[u:v:w]=[ac:b^2-a^2:bc]$, then $u:w=ac:bc=a:b=\sin\alpha:\sin\beta$, and $u:v=\sin\alpha:\sin(\beta-\alpha)$ due to identity \eqref{SinSq}. We leave checking the degenerate cases with base angles $(0,0)$, $(0,\pi)$, $(\pi,0)$ or $(\pi,\pi)$ to the reader. 

Now we can reason as follows. Suppose we know the ``Pythagorean theorem" (implicit equation) for $p:q-p$ tringles, say 
$f_{p,q-p}=0$. Then we should have $f_{p,q-p}(a',b',c')=f_{p,q-p}(a,\frac{b^2-a^2}c,b)=0$, and since $f_{p,q-p}$ is a homogeneous polynomial the composition with it should still vanish on $\CC_{p,q}$. Unfortunately, this does not imply the converse, that any triple satisfying this equation represents the sides of a $p:q$ triangle. Indeed, the naive recursion introduces extraneous factors that have to be dealt with. For instance,
$$
f_{1,2}\circ H=(b+a)\Big((b+a)(b-a)^2-ac^2\Big)=(b+a)f_{1,3},
$$
similarly $f_{1,3}\circ H=(b+a)(b-a)f_{1,4}$. As one may suspect, it is the behavior of $H$ at and near the exceptional locus that is responsible for these extraneous factors. To explain it we need some more terminology \cite[7.4]{SenWin}, \cite[III.7.3]{Wal}. 

Up to now we essentially identified algebraic curves with sets of points. But they are more than sets, $f=0$ and $f^2=0$ define the same set of points in $\C P^2$, but they are different curves. As understood in algebraic geometry, they differ in ``multiplicity". In fact, in algebraic geometry a curve is identified with the polynomial (up to a numerical multiple) that defines it, but notationally it is still convenient to distinguish between the polynomial $f$ and the curve $\CC_f$ as a geometric object. 
\begin{definition}\label{AlgCpq}
Let $\Gamma_{p,q}(t)$ denote the Chebyshev parametrization \eqref{Upq}, and $f_{p,q}$ be the homogeneous polynomial of minimal degree such that $f_{p,q}\circ\Gamma_{p,q}=0$ as a polynomial in $t$. From now on we refine $\CC_{p,q}$ to mean the algebraic curve $\CC_{f_{p,q}}$.
\end{definition}
If $f$ has proper factors the curves they define are called {\bf components} of $\CC_f$. Polynomials that can not be factored non-trivially, and the curves they define, are called {\bf irreducible}. One can show that $f_{p,q}$ is well-defined, irreducible, and of degree $p+q-1$ \cite[4.1]{SenWin}.
\begin{figure}[!ht]
\begin{centering}
\includegraphics[scale=0.5]{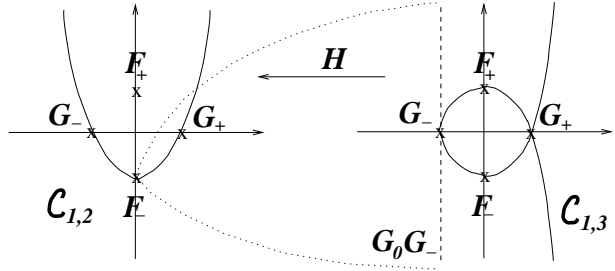} 
\par\end{centering}
\caption{\label{Excline}Exceptional line $G_0G_-$ on the algebraic transform $f_{1,2}\circ H=(b+a)f_{1,3}$.}
\end{figure}
The Heron transformation $H$ maps entire lines into points, so if $f$ happens to be $0$ at them then $f\circ H$ will surely be $0$ on the entire exceptional line. We can now explain the origin of the extraneous factors. Chebyshev curves never pass through $F_0$ (it is neither a triangle point nor the source point), but they do pass through $F_\pm$, which are the blow downs by $H$ of the lines $G_0G_\pm$ with equations $b\mp a=0$. These are exactly the factors that appear in 
$f_{p,q-p}\circ H$ in addition to $f_{p,q}$, see Fig. \ref{Excline}. Thus, $f_{p,q}$ is what from $f_{p,q-p}\circ H$ after dividing out these exceptional factors.
\begin{definition} Let $\CC_f$ be a projective algebraic curve defined by a homogeneous polynomial $f$. We call the curve $\overline{\CC}_f:=\CC_{f\circ H}$ the {\bf algebraic transform} of $\CC_f$. Furthermore, we denote by $\ff$ the polynomial obtained from $f\circ H$ by dividing out all the exceptional factors, and call the curve $\widetilde{\CC}_f:=\CC_{\ff}$ the {\bf proper transform} of $\CC_f$.
\end{definition}
Of course, $\ff$ divides $f\circ H$ so $\widetilde{\CC}_f\subseteq\overline{\CC}_f$ as sets, but at this point it is conceivable that the exceptional factors are not the only extraneous factors that can appear. The next theorem rules out this worry.
\begin{theorem}\label{HerProp} Let $p,q$ be relatively prime positive integers, and $q>p$. Then up to scale, $\ff_{p,q-p}=f_{p,q}^m$ for some $m\geq1$, and the only components of $\overline{\CC}_{p,q-p}$ other than $\CC_{p,q}$ (if any) are the exceptional lines $G_0G_\pm$ of $H$. It does contain them if and only if $\CC_{p,q-p}$ passes through the gate points $F_\pm$, respectively.
\end{theorem}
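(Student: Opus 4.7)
The plan is to identify all irreducible components of $\overline{\CC}_{p,q-p}$ by analyzing the action of $H$ and $H^{-1}$ alongside the three exceptional lines $c=0$, $b-a=0$, $b+a=0$ of $H$. The inclusion $\CC_{p,q}\subset\overline{\CC}_{p,q-p}$ follows by substituting the parametrization $\Gamma_{p,q}(t)$ from \eqref{Upq}: since $H$ transforms the base angles $(p\theta,q\theta)$ into $(p\theta,(q-p)\theta)$, the composition $H\circ\Gamma_{p,q}$ parametrizes $\CC_{p,q-p}$, so $f_{p,q-p}\circ H\circ\Gamma_{p,q}\equiv 0$. By the minimality defining $f_{p,q}$ in Definition \ref{AlgCpq} together with its irreducibility, $f_{p,q}\mid f_{p,q-p}\circ H$, and I can write $f_{p,q-p}\circ H = f_{p,q}^{m}\,g$ with $m\geq 1$ and $\gcd(f_{p,q},g)=1$.

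To prove that every irreducible factor of $g$ cuts out an exceptional line of $H$, let $\CC_h$ be an irreducible component of $\CC_g$ and $P$ a generic point of it. If $\CC_h$ were not among $\{c=0,\,b-a=0,\,b+a=0\}$, then at $P$ the map $H$ is defined and locally invertible, $H(P)\in\CC_{p,q-p}$, and the angle calculus for $H^{-1}$ (which sends $p:(q-p)$ triangles back to $p:q$ triangles) forces $P=H^{-1}(H(P))\in\CC_{p,q}$, contradicting $\gcd(f_{p,q},g)=1$. Hence the only possible extraneous factors of $f_{p,q-p}\circ H$ are powers of $c$, $b-a$, $b+a$.

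Finally, each of these three lines is contracted by $H$ to a single point: $c=0\mapsto F_0$, $b-a=0\mapsto F_+$, $b+a=0\mapsto F_-$. Restricting $f_{p,q-p}\circ H$ to such a line and using homogeneity reduces to evaluating $f_{p,q-p}$ at the image point, so the line divides $f_{p,q-p}\circ H$ iff that gate point lies on $\CC_{p,q-p}$. The point $F_0=[0:1:0]$ never lies on $\CC_{p,q-p}$, since its parametrization would force $U_{p-1}$ and $U_{q-1}$ to share a root, impossible when $\gcd(p,q)=1$; only the $b\pm a$ factors remain, giving the stated equivalence and $\ff_{p,q-p}=f_{p,q}^{m}$ up to scale. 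The main obstacle I anticipate is the clean execution of the birational step: making rigorous that $H$ and $H^{-1}$ are mutually inverse at a generic point of each candidate component, and that $H^{-1}(\CC_{p,q-p})=\CC_{p,q}$ off the exceptional loci. These density and closure checks, though plausible from the angle calculus, are what lift the geometric picture into a proof.
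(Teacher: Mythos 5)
Your proposal is correct and follows essentially the same route as the paper: both arguments rest on the fact that $H$ and $H^{-1}$ are mutually inverse off the exceptional loci (so the angle calculus forces non-exceptional points of $\overline{\CC}_{p,q-p}$ onto $\CC_{p,q}$), on the irreducibility of $f_{p,q}$, and on the observation that the only curves contracted by $H$ are the three lines blown down to $F_0$, $F_+$, $F_-$, with $c=0$ ruled out because $F_0\notin\CC_{p,q-p}$. The only organizational difference is that you extract the divisibility $f_{p,q}\mid f_{p,q-p}\circ H$ directly from the parametrization, whereas the paper first proves the set inclusion $\widetilde{\CC}_{p,q-p}\subseteq\CC_{p,q}$ and then appeals to the Nullstellensatz; your explicit coprimality-of-roots argument for $F_0\notin\CC_{p,q-p}$ is a nice replacement for the paper's earlier remark that $F_0$ is not a triangle point.
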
 
\begin{proof}First we will show that $\widetilde{\CC}_{p,q-p}$ is a subset of $\CC_{p,q}$. Suppose $\xi\in\widetilde{\CC}_{p,q-p}$ is non-exceptional for $H$. Then $H(\xi)$ is also non-exceptional, and $f_{p,q}(H(\xi))=0$, i.e. $H(\xi)\in\CC_{p,q-p}$. Moreover, $\xi=H^{-1}\big(H(\xi)\big)$, and since $H^{-1}$ transforms $p:q-p$ triangles into $p:q$ triangles, we have that $\xi\in\CC_{p,q}$. There are at most finitely many  exceptional points on $\widetilde{\CC}_{p,q-p}$, because all exceptional factors are divided out, and on $\CC_{p,q}$, because with the exception of $\CC_{1,1}$ it has no straight line components, as the parametrization shows. Since both of them are algebraic curves we have $\widetilde{\CC}_{p,q-p}\subseteq\CC_{p,q}$ as sets. It follows from Hilbert's Nullstellensatz \cite[2.3.10]{Reid} that $\ff_{p,q-p}$ divides $f_{p,q}^n$ for some $n\geq1$. But $f_{p,q}$ is irreducible, so $\ff_{p,q-p}=f_{p,q}^m$ for some $m\geq1$ up to scale, and $\widetilde{\CC}_{p,q-p}$, $\CC_{p,q}$ coincide as sets.

If $\xi\in\overline{\CC}_{p,q-p}$, i.e. $(f_{p,q-p}\circ H)(\xi)=0$, and $H(\xi)$ is defined and non-exceptional, then $H(\xi)\in\CC_{p,q-p}$ and $\xi=H^{-1}\big(H(\xi)\big)\in\CC_{p,q}$. The only other possibilities are that $H(\xi)$ is undefined (i.e. $\xi$ is a base point of $H$) or $H(\xi)$ is a base point of $H^{-1}$ (other exceptional points are not in the image of $H$). There are only three base points, so the former case produces no components, and since $F_0\notin\CC_{p,q-p}$ in the latter case we must have $H(\xi)\in\{F_+,F_-\}$, so $\xi\in G_0G_+\cup G_0G_-$\,. Conversely, if $\CC_{p,q-p}$ passes through $F_+$ and/or $F_-$ then $f_{p,q-p}$ vanishes at them. Hence, its composition with $H$ vanishes on $G_0G_+$ and/or $G_0G_-$, respectively. Thus, they will be components of $\overline{\CC}_f$.
\end{proof}
As a matter of fact, $m=1$ and $f_{p,q}=\ff_{p,q-p}$, giving us the promised recursion. However, proving it, and determining the powers of the exceptional factors to be divided out of $f_{p,q-p}\circ H$ to get $\ff_{p,q-p}$, requires more work.

\section{Singularities and ``Pythagorean theorems"}\label{S5}

To relate $f_{p,q-p}\circ H$ to $\ff_{p,q-p}$ more precisely we need to know the powers of the exceptional factors, and we can expect from Theorem \ref{HerProp} that those depend on the behavior of $\CC_{p,q-p}$ at  $F_+$ and $F_-$. Indeed, it turns out that the powers of the factors are equal to the algebraic multiplicities of $F_\pm$ on $\CC_{p,q-p}$.
\begin{definition} Let $\CC_f$ be a projective algebraic curve and $\xi\in\CC_f$. Then the {\bf algebraic multiplicity} 
$\boldsymbol{m_f(\xi)}$ is the degree of the Taylor polynomial of $f$ at $\xi$ in some (and then any) affine coordinates. If $\xi\notin\CC_f$ we set $m_f(\xi):=0$. A point $\xi$ is called {\bf singular} (or a {\bf singularity}) if $m_f(\xi)\geq2$ \cite[6.2,10.2]{Gib}. 
\end{definition}
As an example, let us find the algebraic multiplicity of the source point $G_0$ on $\CC_{p,q}$. In $x=\frac{a}{c}$, $y=\frac{b}{c}$ coordinates $G_0$ is at the origin, and for $t\sim\infty$ we have:
\begin{equation*}
x=\frac{U_{p-1}(t)}{U_{p+q-1}(t)}\sim\frac{(2t)^{p-1}}{(2t)^{p+q-1}}=(2t)^{-q};\ \, 
y=\frac{U_{q-1}(t)}{U_{p+q-1}(t)}\sim\frac{(2t)^{q-1}}{(2t)^{p+q-1}}=(2t)^{-p}\,.
\end{equation*}
Therefore, the implicit equation of $\CC_{p,q}$ near the origin is $x^p-y^q=0$, up to the higher order terms. Thus, 
$m_{f_{p,q}}(G_0)=\min(p,q)$, and $G_0$ is singular whenever $p,q\geq2$. At self-intersection points the algebraic multiplicity is at least the number of (local) branches, but it can be strictly greater because of multiple tangents. In particular, if the real self-intersection multiplicity of a gate point in Table \ref{GateMult} is $2$ or more that point is also singular on $\CC_{p,q}$.

We will need some classical results concerning transformation of point multiplicities by quadratic Cremona transformations. They are usually formulated for the standard quadratic transformation $Q([a:b:c])=[bc:ac:ab]$ \cite[II.1.1]{Cool}, \cite[III.7.4]{Wal}, which was classically used to {\it resolve singularities} of plane curves. But $H$ can be obtained from $Q$ by composing with invertible linear transformations of $\C P^2$ that preserve all multiplicities, which makes it easy to rephrase results about $Q$ in terms directly applicable to $H$.
\begin{theorem}[{\bf \cite[III.7.4]{Wal}}]\label{TransMult} Let $\Phi$ be a quadratic Cremona transformation and $\CC_f$ be a degree $d$ plane algebraic curve with algebraic multiplicities $m_1$, $m_2$, $m_3$ at the base points of $\Phi^{-1}$. Then $\widetilde{\CC}_f$ is a degree $2d-m_1-m_2-m_3$ curve with the algebraic multiplicities $d-m_2-m_3$, $d-m_1-m_3$, $d-m_1-m_2$ at the base points of $\Phi$ (blown down from the exceptional lines of $\Phi^{-1}$ through its base points with the corresponding indices). The algebraic multiplicities of all non-exceptional points of $\CC_f$ are preserved on 
$\widetilde{\CC}_f$.
\end{theorem}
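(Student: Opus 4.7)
The plan is to reduce to the standard quadratic transformation $Q([a:b:c])=[bc:ac:ab]$, which is its own set-theoretic inverse on its domain and has base points $E_0, F_0, G_0$. As noted in the text before the statement, any quadratic Cremona $\Phi$ factors as $\Phi = L_2 \circ Q \circ L_1$ for invertible linear automorphisms $L_1, L_2$ of $\C P^2$, where $L_1$ sends the base points of $\Phi$ onto $E_0, F_0, G_0$ and $L_2$ sends $E_0, F_0, G_0$ onto the base points of $\Phi^{-1}$. Since linear automorphisms preserve degrees of polynomials and algebraic multiplicities of curves at points, as well as the property of being an exceptional line, it suffices to prove the theorem for $\Phi = Q$. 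After absorbing the linear factors into $f$, we may take $f$ to be a degree-$d$ homogeneous polynomial with multiplicities $m_1, m_2, m_3$ at $E_0, F_0, G_0$.

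Expand $f = \sum_{i+j+k=d} c_{ijk}\, a^i b^j c^k$. Working in the affine chart $a=1$ shows that $m_1 = \min\{j+k : c_{ijk} \neq 0\}$, and similarly $m_2, m_3$ are the minima of $i+k$ and $i+j$. Substitution gives
\[
f \circ Q \;=\; \sum c_{ijk}\, a^{j+k} b^{i+k} c^{i+j},
\]
so $a^{m_1} b^{m_2} c^{m_3}$ is precisely the largest monomial factor of $f\circ Q$. Assuming $f$ has no exceptional line $a=0$, $b=0$, or $c=0$ as a component (such components split off and should be considered separately), dividing these factors out yields $\ff$ of degree $2d - m_1 - m_2 - m_3$. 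For the multiplicity of $\ff$ at $E_0$, each surviving monomial in the chart $a=1$ becomes $b^{i+k-m_2} c^{i+j-m_3}$ of total $(b,c)$-degree $i + d - m_2 - m_3$; the minimum $d - m_2 - m_3$ is attained at terms with $i = 0$, which exist precisely because $a \nmid f$. The multiplicities $d - m_1 - m_3$ at $F_0$ and $d - m_1 - m_2$ at $G_0$ follow by symmetric computations, giving the blow-down pattern described.

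For a non-exceptional point $\xi$ with $abc \neq 0$, the map $Q$ restricts to a biregular involution on the open set $\{abc \neq 0\}$, hence preserves algebraic multiplicities of plane curves at $\xi$; since $a^{m_1} b^{m_2} c^{m_3}$ is a nonvanishing unit near $Q^{-1}(\xi)$, dividing it out does not change the multiplicity of the proper transform at that point. The main obstacle I anticipate is the bookkeeping needed when transporting everything back through $L_1$ and $L_2$: the multiplicities $m_i$ on the input side are attached to base points of $\Phi^{-1}$, while on the output side they cross-combine to give multiplicities at base points of $\Phi$ via the blow-down correspondence, so one must verify that the two indexings match up consistently with the exceptional-line geometry of $\Phi$ (specifically, that the exceptional line of $\Phi^{-1}$ through base points $P_j, P_k$ blows down to the base point of $\Phi$ where the multiplicity $d-m_j-m_k$ is attained). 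Once this correspondence is tracked correctly, the general case drops out of the computation for $Q$.
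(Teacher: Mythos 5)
The paper offers no proof of this statement at all --- it is quoted from Walker [III.7.4] as a classical fact --- so there is no in-paper argument to compare against; your proposal supplies a correct, self-contained derivation along the standard textbook lines (reduce to the standard quadratic transformation $Q$ by linear changes of coordinates, then read everything off the monomial substitution $a^ib^jc^k\mapsto a^{j+k}b^{i+k}c^{i+j}$). Two points deserve to be made explicit. First, no cancellation can occur in $f\circ Q$, because on the exponent set $\{i+j+k=d\}$ the map $(i,j,k)\mapsto(j+k,i+k,i+j)=(d-i,d-j,d-k)$ is injective; without this remark the ``minimum degree of a surviving monomial'' computations would not immediately give the multiplicities. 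Second, the factorization $\Phi=L_2\circ Q\circ L_1$ holds only for quadratic Cremona transformations with three distinct proper base points (those with infinitely near base points are not linearly equivalent to $Q$); this is the case implicitly assumed by the statement and the only one the paper uses, since $H$ has the three distinct base points $G_0,G_\pm$. You are also right to flag the implicit hypothesis that $\CC_f$ contains no exceptional line of $\Phi^{-1}$ as a component: the statement omits it, but it is needed for the multiplicity count at the image base points (otherwise $\min i>0$ in your chart computation), and it is satisfied in every application in the paper because each $f_{p,q}$ is irreducible and the only linear one, $\CC_{1,1}\colon b-a=0$, is not among the exceptional lines $b=0$, $c=\pm a$ of $H^{-1}$. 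With those caveats recorded, your argument is complete and can stand in place of the external citation.
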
 
If we could show that all $\CC_{p,q}$ can be obtained by iterating the Heron transformation starting from $\CC_{1,1}$, which is the line $b=a$, we would have a complete account of their singularities and their multiplicities, and hence of the exponents of the exceptional factors.

Unfortunately, this is not quite the case. We can go from $\CC_{p,q}$ to $\CC_{p,q+p}$, so we can get the chain of curves $1:1\to1:2\to1:3\to\dots$ through proper transforms by $H$, but we can never alter the first index. This is easily remedied, however.
\begin{definition} Let $S$ (for {\bf swap transformation}) denote the map $\C P^2\to\C P^2$ that transposes the first two homogeneous coordinates, $S([a:b:c])=[b:a:c]$. Clearly, $S=S^{-1}$. For curves we define $S(\CC_f)=\CC_{f\circ S^{-1}}=\CC_{f\circ S}$.
\end{definition}
\begin{figure}[!ht]
\begin{equation*}
\begin{array}{ccccccccccc}
1:1 &  \xrightarrow[]{H} & 1:2 &  \xrightarrow[]{H} &   1:3 &  \xrightarrow[]{H} & 1:4 &  \xrightarrow[]{H} & 1:5 &  \xrightarrow[]{H} & \dots  \\
  &  & \vdots &   &  \downarrow\lefteqn{S}  &   & \vdots &   & \vdots &   &   \\
 &   &  & & 3:1 & \xrightarrow[]{H}   &  3:4 & \xrightarrow[]{H} &  3:7 & \xrightarrow[]{H} &  \dots     \\
 &   &  &   & \vdots   &   & \downarrow\lefteqn{S}  &   & \vdots &   &   \\
 &   &  & &  &    &  4:3 & \xrightarrow[]{H} &  4:7 & \xrightarrow[]{H} &  \dots     \\
\end{array}
\end{equation*}
\caption{\label{EucRa} Euclidean algorithm recursion on ratios.}
\end{figure}
Obviously, $S(\CC_{p,q})=\CC_{q,p}$, and, being linear and invertible, $S$ preserves all the algebraic multiplicities. Now we can swap $1:3$ into $3:1$, and obtain some $3:q$ curves. At any point the swap can be used again to change the first index, etc. Clearly, many more $\CC_{p,q}$ can be generated in this way, Fig. \ref{EucRa}. But can we generate them all? It turns out that we can, and the requisite sequence of transformations is obtained by performing the Euclidean algorithm on $q,p$.

Let $p,q$ be relatively prime, $q>p$. According to the Euclidean algorithm, 
\begin{align*}
&q=d_1p+r_1\\
&p=d_2r_1+r_2\\
&\ \ \ \ \ \ \vdots\\
&r_{n-2}=d_nr_{n-1}+1\\
&r_{n-1}=d_{n+1}1+0\,.
\end{align*}
So $H^{d_1}$ takes us from $p:r_1$ to $p:r_1+d_1p=p:q$. The swap gets us $p:r_1$ from $r_1:p$, which in turn we get from $r_1:r_2$ by applying $H^{d_2}$, and so on. Since there is no $\CC_{1,0}$ we rewrite the last line as $r_{n-1}=(d_{n+1}-1)1+1$ to start from $\CC_{1,1}$. Thus, we can get $\CC_{p,q}$ by applying 
\begin{equation}\label{HerRec}
H^{d_1}SH^{d_2}S\dots H^{d_n}SH^{d_{n+1}-1}
\end{equation}
to $\CC_{1,1}$, where each application of $H$ means taking the zero set of the proper transform by $H$. To track the multiplicities of potential singularities through these transformations we will look at how $H$ and $S$ transform the gate points, and apply Theorem \ref{TransMult}.
\begin{theorem}[{\bf Pythagorean recursion}]\label{TPythIt} For all relatively prime positive integers $p,q$ with $q>p$ we have $\CC_{p,q}=\widetilde{\CC}_{p,q-p}$. All singularities of $\CC_{p,q}$, if any, are at the gate points and/or at the source point. The algebraic multiplicity of the source point is $\min(p,q)$, and the algebraic multiplicities of the gate points are equal to their real self-intersection multiplicities given in Table \ref{GateMult}. Let $f_{p,q}$ be the implicit equation of $\CC_{p,q}$, and $H$ be the Heron transformation. Then up to scale
\begin{equation}\label{PythIt}
f_{p,q-p}\circ H=(b+a)^{m_{p,q}^+}(b-a)^{m_{p,q}^-}f_{p,q}\,,
\end{equation}
where for $q-p$ odd $m_{p,q}^\pm=\frac{q-p-1}2$, while for $q-p$ even $m_{p,q}^+=\frac{q-p}2$, and $m_{p,q}^-=\frac{q-p}2-1$.
\end{theorem}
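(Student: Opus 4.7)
The plan is strong induction on $p+q$ via the reduction $(p,q)\mapsto(p,q-p)$, which has $p+(q-p)=q<p+q$ and the same $\gcd$. The base case $(p,q)=(1,1)$ is immediate: $\CC_{1,1}$ is the line $b=a$ of degree~$1$ with $G_{+}$ as its only gate point (multiplicity~$1$) and no singularity, so every claim of the theorem holds vacuously or trivially. For the inductive step fix $(p,q)$ with $q>p$ and assume the theorem for all coprime pairs of strictly smaller sum.

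Theorem~\ref{HerProp} already yields $\ff_{p,q-p}=f_{p,q}^{\,m}$ up to scale for some $m\ge 1$, with at most the two exceptional lines $b=\pm a$ of $H$ as the remaining components of $\overline{\CC}_{p,q-p}$. A direct computation with $H([a:b:c])=[ac:b^{2}-a^{2}:bc]$ shows that $b=a$ blows down to $F_{+}$ and $b=-a$ to $F_{-}$, so by Theorem~\ref{TransMult} these lines appear in $f_{p,q-p}\circ H$ with exponents $\mu_{+}:=m_{f_{p,q-p}}(F_{+})$ and $\mu_{-}:=m_{f_{p,q-p}}(F_{-})$ respectively. The inductive hypothesis applied to $(p,q-p)$ (after a harmless swap $S$ if $q-p<p$, which preserves all multiplicities) reads these values off Table~\ref{GateMult}, and a short case check on the parities of $p$ and $q$ confirms both that $\mu_{+}+\mu_{-}=q-p-1$ and that $(\mu_{-},\mu_{+})=(m_{p,q}^{+},m_{p,q}^{-})$ as stated.

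Since $F_{0}\notin\CC_{p,q-p}$ (it represents neither a triangle nor the source point) and $\deg f_{p,q-p}=q-1$, Theorem~\ref{TransMult} then gives
$$\deg f_{p,q}^{\,m}=\deg\ff_{p,q-p}=2(q-1)-\mu_{+}-\mu_{-}=p+q-1=\deg f_{p,q},$$
forcing $m=1$ and establishing $\CC_{p,q}=\widetilde{\CC}_{p,q-p}$ together with the explicit recursion \eqref{PythIt}. Theorem~\ref{TransMult} also preserves multiplicities at non-exceptional points, so by induction the only candidate singularities of $\CC_{p,q}$ are $E_{\pm}$, $F_{\pm}$ (inherited from $\CC_{p,q-p}$ via the preservation clause) and the base points $G_{0},G_{\pm}$ of $H$, at which Theorem~\ref{TransMult} supplies the multiplicities by the ``$d-m_{i}-m_{j}$'' rule. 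In particular $G_{0}$ receives multiplicity $q-1-\mu_{+}-\mu_{-}=p=\min(p,q)$, recovering the value already read off the Chebyshev parametrization, and the entries at the six gate points are matched against the $(p,q)$-row of Table~\ref{GateMult} by a final parity check.

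The main obstacle is purely bookkeeping: one must push the parity case analysis through all three rows of Table~\ref{GateMult}, verifying both that the $\mu_{\pm}$ delivered by induction match $m_{p,q}^{\mp}$ and that Theorem~\ref{TransMult}'s output at each of $E_{\pm},F_{\pm},G_{0},G_{\pm}$ coincides with the relevant table entry. The swap $S$ is never substantively involved; it merely brings an inductively invoked pair into the form in which the theorem is stated, and being linear it transports every multiplicity unchanged.
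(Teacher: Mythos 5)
Your argument is essentially the paper's: the paper runs the induction along the Euclidean-algorithm chain $H^{d_1}SH^{d_2}S\cdots$ starting from $\CC_{1,1}$, which is just a repackaging of your strong induction on $p+q$ via $(p,q)\mapsto(p,q-p)$ with a swap $S$ when needed; both use Theorem \ref{HerProp} to get $\ff_{p,q-p}=f_{p,q}^m$, the degree count $2(q-1)-(q-p-1)=p+q-1$ from Theorem \ref{TransMult} to force $m=1$, the identification of the exceptional-factor exponents with $m_{f_{p,q-p}}(F_\mp)$, and a parity case check against Table \ref{GateMult} (including the observation that $S$ permutes the table into itself).

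There is one genuine gap, in the claim that \emph{all} singularities of $\CC_{p,q}$ lie at the gate points and the source point. The preservation clause of Theorem \ref{TransMult} covers only the non-exceptional points of $\CC_{p,q-p}$, and the base-point formula covers $G_0,G_\pm$; it says nothing about points of $\widetilde{\CC}_{p,q-p}$ that lie on the exceptional lines $b=\pm a$ of $H$ away from its base points. Those points are exactly where the blown-up singularities of $\CC_{p,q-p}$ at $F_\pm$ land, and a priori the blow-up of a multiple point can leave singular points spread along the exceptional line. So your list of ``candidate singularities'' is not justified as stated. The paper closes this by noting that the algebraic multiplicity at each $F_\pm$ equals the number of distinct real branches through it, each with a distinct tangent (Theorem \ref{ParRec} and Table \ref{GateMult}), so the blow-up separates these branches into distinct smooth points and no non-base exceptional point of $\CC_{p,q}$ is singular. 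You need some version of this observation (or another argument controlling the proper transform along the exceptional lines) to complete that part of the theorem; everything else in your proposal is sound.
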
 
\begin{proof}Let $\mathcal{H}_{p,q}$ denote the curves obtained from $\CC_{1,1}$ by composing proper transforms by $H$ and $S$ as in \eqref{HerRec}. We will prove that $\mathcal{H}_{p,q}=\CC_{p,q}$, along with the claims about multiplicities, by induction on their application. Both claims are true for $\CC_{1,1}$ since it is a straight line passing through $G_+$ and $G_0$, and no other gate points. Now suppose that they hold for $\CC_{p,q}=\mathcal{H}_{p,q}$, we need to show the same for $\CC_{p,q+p}$ and $\CC_{q,p}$. 

For $\mathcal{H}_{p,q+p}=\widetilde{\CC}_{p,q}$ there are three cases: when $p,q$ are both odd, and when $p$ or $q$ is even (they are relatively prime, so not both even). We will only check the first case, the other two are analogous. Applying the transformation formulas from Theorem \ref{TransMult} to the first row of Table \ref{GateMult} we get values that should match the values in the second row of Table \ref{GateMult} with $q$ replaced by 
$p+q$ since $p+q$ is even. And they do. The identity \eqref{PythIt}, with $f_{p,q}$ replaced by $\widetilde{f}_{p,q-p}$, follows directly from Theorem \ref{TransMult}, with $m_{p,q}^\pm$ being the multiplicities of $F_{\mp}$ on $\CC_{p,q-p}$, which we expressed explicitly according to Table \ref{GateMult}. In all cases $m_{p,q}^++m_{p,q}^-=q-p-1$, so 
$$
\deg(\widetilde{f}_{p,q-p})=2\deg\widetilde{f}_{p,q-p}-(q-p-1)=2(p+q-p-1)-(q-p-1)=\deg(f_{p,q}).
$$
But by Theorem \ref{HerProp} we have $f_{p,q}^m=\widetilde{f}_{p,q-p}$. Hence, $m=1$ and $f_{p,q}=\widetilde{f}_{p,q-p}$. Applying this to $f_{p,q+p}$ we get $f_{p,q+p}=\widetilde{f}_{p,q}$ and $\CC_{p,q+p}=\mathcal{H}_{p,q+p}$. 

Since the swap preserves the degrees of curves $\mathcal{H}_{q,p}=\CC_{q,p}$ is obvious. In terms of Table \ref{GateMult}, applying $S$ amounts to transposing $p$ and $q$, $E_{\pm}$ and $F_{\pm}$ columns (because $S$ swaps those points), and the second and the third rows (because the parities of $p$ and $q$ are also swapped). As one can check, the combination of these moves leaves Table \ref{GateMult} intact. Hence, $\CC_{q,p}$ also has the multiplicities given by it. This completes the induction.

Since the algebraic multiplicities of the gate points are fully accounted for by the real self-intersecting branches with distinct tangents all non-base exceptional points of $\CC_{p,q}$ are non-singular. The non-exceptional points are also non-singular since their multiplicities are preserved. Thus, starting from $\CC_{1,1}$ the proper transform by $H$ can only create singularities at $G_0$, $G_{\pm}$, and then move the latter to $F_{\mp}$. From there $S$ can also move them to 
$E_{\mp}$, which it swaps with $F_{\mp}$. But $S$ fixes $G_0$, $G_{\pm}$, and $H$ fixes $E_{\pm}$, so no further singularities are created by this process. Since the multiplicity of $G_0$ was computed earlier this concludes the proof.
\end{proof}
Note that the iteration based on \eqref{PythIt} is not only more effective than computing resultants, but also more so than the naive instruction to divide out all the exceptional factors from $f_{p,q}\circ H$. In the affine coordinates $x=\frac{b}{a}$, $y=\frac{c}{a}$ the exceptional factors become $x\pm1$, and since  \eqref{PythIt} gives us their exact multiplicities we can divide them out by the ordinary long division in one variable.

\bigskip

\noindent {\small {\bf Acknowledgements:} The author is grateful to R.G. Rogers for explaining the history of ideas developed in this paper, and to D. Karp for discussions about the geometry of plane Cremona transpformations.}

{\small

}

\end{document}